\newtheorem{thm}{Theorem}
\newtheorem{cor}[thm]{Corollary}
\newtheorem{prop}[thm]{Proposition}
\theoremstyle{definition}
\newtheorem{rem}[thm]{Remark}
\newtheorem{exa}[thm]{Example}
\numberwithin{equation}{section}
\newcommand*{\vcenteredhbox}[1]{\begingroup
\setbox0=\hbox{#1}\parbox{\wd0}{\box0}\endgroup}
\def\set@curr@file#1{%
  \begingroup
    \escapechar\m@ne
    \xdef\@curr@file{\expandafter\string\csname #1\endcsname}%
  \endgroup
}
\def\quote@name#1{"\quote@@name#1\@gobble""}
\def\quote@@name#1"{#1\quote@@name}
\def\unquote@name#1{\quote@@name#1\@gobble"}
\def\eps{\varepsilon}
\def\fig#1{\vcenteredhbox{\includegraphics[height=0.8in]{#1}}}
\def\ffig#1#2{\vcenteredhbox{\includegraphics[height=#1]{#2}}}
\def\E{E}
\def\eps{\varepsilon}
\def\Z{\mathbb Z}
\def\A{\mathcal A}
\def\R{\mathbb R}
\def\D{\mathcal{D}}
\def\clasp{clasp\ }
\begin{document}

\title{Encoding knots by clasp diagrams}

\author[J. Mostovoy]{J. Mostovoy}
\address{Departamento de Matem\'aticas, CINVESTAV-IPN, Av.\ IPN 2508, Col.\ San Pedro Zacatenco, Ciudad de M\'exico, C.P.\ 07360\\ M\'exico}
\email{jacob@math.cinvestav.mx}
\author[M. Polyak]{M. Polyak}
\address{Department of Mathematics, Technion, Haifa 32000, Israel}
\email{polyak@math.technion.ac.il}

\subjclass[2010]{57M25} 
\keywords{knots, string links, clasp diagrams, chord diagrams, Vassiliev invariants}

\begin{abstract} We introduce a new combinatorial method to encode knots and links with applications to knot invariants.
Clasp diagrams defined in this paper are combinatorial blueprints for building knot diagrams out of full twists on two strings rather than out of crossings. We describe an equivalence relation on clasp diagrams which produces the isotopy classes of knots  as equivalence classes. This equivalence relation is generated by local moves similar to the Reidemeister moves.

Clasp diagrams produce particularly simple Seifert surfaces for knots and lead to an explicit formula for the Alexander-Conway polynomial. They are also well-suited for the study of the Vassiliev invariants; we show that any such invariant can be obtained via subdiagram count in the clasp diagrams.
\end{abstract}

\thanks{The second author was partially supported by the ISF grant 1794/14 and the FORDECYT grant 265667.}

\maketitle

\section{Introduction}

A knot in $\R^3$ is most easily represented by a generic planar projection, with over- and undercrossings indicated. Such a projection can be encoded combinatorially by a Gauss diagram which is a set of distinct points on a circle grouped into several ordered pairs, graphically represented by arrows, each arrow endowed with a sign. Knot projections and Gauss diagrams are convenient for calculating knot invariants; however, they have a drawback: there is no general way to smooth a crossing on a knot projection without creating a link with more than one component, or to remove an arrow from a Gauss diagram without creating a virtual knot. 

One may think of a Gauss diagram as ``generated'' by the crossings of a knot. Similarly, any braid can be written as a product of ``crossings''; that is, elementary braids. However, in the study of pure braids one has an (often) better option: any pure braid is a product of ``full twists''. Full twists are well-suited for various problems, such as proving that the pure braid group is residually nilpotent; this fact may also be stated as ``finite-type invariants distinguish braids''. Here we propose a variation on the concept of a Gauss diagram which uses full twists, rather than crossings; these are the clasp diagrams in the title of this paper. We prove that each knot can be represented by a clasp diagram and identify a finite set of moves such that two diagrams represent the same knot if and only if they can be transformed into each other by a sequence of these moves.

The values of certain knot invariants can be calculated by counting the subdiagrams of the Gauss diagram of a knot \cite{PV}. 
In particular, the value of any given Vassiliev invariant on an arbitary knot can be calculated in this way; this result, due to Goussarov, \cite{GPV} is far from trivial. We show that Goussarov's theorem has its counterpart for clasp diagrams; in contrast with the Gauss diagram case, the proof is completely straightforward. 

Clasp diagrams also turn out to be an efficient tool to encode the Seifert surface of a knot; as a consequence of this, the  Alexander polynomial of a knot can be easily calculated from its clasp diagram. Clasp diagrams can also be useful in the computations of other polynomial invariants defined by skein relations.

The idea of replacing crossings in a knot diagram by full twists has been exploited before; in particular, it is implicit in Goussarov's notion of $I$-modification \cite{G}.  
The closest definition to that of a clasp diagram was given by Hugelmeyer in \cite{Hugelmeyer}: his signed chord diagrams are what we call ``descending clasp diagrams'' and his motivation (namely, understanding the Vassiliev invariants) is close to ours. However, the set of signed chord diagrams turns out to be too small; as a result, the equivalence relation on these diagrams is quite complicated. We consider a larger set of diagrams than in \cite{Hugelmeyer} and obtain considerably simpler equivalence relations, which are similar to the Reidemeister moves on knots. The relation between the clasp diagrams and Hugelmeyer's signed chord diagrams is roughly similar to that between all pure braids and combed pure braids; in particular, the minimal number of chords required to encode a given knot will typically be smaller for a clasp diagram than for a signed chord diagram. We prove Hugelmeyer's main theorem as part of our results.

The paper is organized as follows. In the next section we define the clasp diagrams, explain how they represent knots and string links and describe, in Subsection~\ref{subsec:moves}, a finite set of moves which do not change the isotopy class of the knot represented by a diagram. We also state the main theorems of the paper. Theorem~\ref{thm:representation} affirms that each knot can be represented by a clasp diagram and Theorem~\ref{thm:descending} says that this diagram can be chosen so as to be of a special form. Theorem~\ref{thm:moves} says that two diagrams represent the same knot if and only if they are related by a sequence of the moves defined in Subsection~\ref{subsec:moves}. In Section~\ref{section:alex} we describe the Seifert surface coming from a clasp diagram and show how this leads to the computation of the Alexander polynomial. Section~\ref{section:vassiliev} is dedicated to the Vassiliev invariants; in particular, we show how to construct the universal invariant of order $n$.  Sections~\ref{section:shortcircuit} and \ref{section:proof} are dedicated to the proofs of the main results. 

One necessary note about the terminology. There are many ways to think about parametrized knots in the three-dimensional space and we will use three of them. A usual ``compact'' or ``round'' knot is a smooth embedding $S^1\to \R^3$. A ``long'' knot is a smooth embedding $\R\to \R^3$ which coincides with the embedding $t\mapsto (0,0,-t)$ outside of some compact subset of $\R$. According to a weaker definition, a long knot is a smooth embedding $\R\to \R^3$ whose tangent vector, outside of some compact subset of $\R$, coincides with $(0,0,-1)$. The equivalence classes of knots under smooth homotopy are the same for all three definitions and can be canonically identified; we will use this fact as obvious and give no further explanations. We will use mostly round knots for pictures, long knots coinciding with the $z$-axis at infinity in the context of string links and long knots with the fixed tangent vector at infinity whenever we speak of braid closures. 

\section{Clasp diagrams}
\subsection{Definitions}\label{subsec:clasps}

We will use the following terminology. A \emph{chord diagram} is a finite set of intervals smoothly embedded into an oriented two-dimensional disk whose boundary carries a finite set of marked points. The endpoints of the intervals, all distinct, lie on the boundary of the disk away from the marked points. The set of marked points, if non-empty, has a distinguished point.

The embedded intervals are called \emph{chords} and the boundary of the disk without the marked points is called the \emph{skeleton} of the diagram. A chord diagram whose set of marked points is empty is called \emph{compact}; its skeleton is a circle. A chord diagram with at least one marked point is referred to as a \emph{string link} chord diagram; its skeleton consists of several copies of $\R$. A diagram with exactly one marked point is called \emph{long}. Chord diagrams are considered up to smooth homotopies of the chords and smooth dispacements of the marked points that keep all the endpoints and the marked points distinct.

 A \emph{\clasp diagram} is a chord diagram whose chords are ordered and equipped with a sign. This order on the chords will be called \emph{height} and shown in pictures by means of over- and undercrossings, see Figure~\ref{fig:clasp_diagrams}.

\begin{figure}[htb]
%\vspace{1.2in}
\includegraphics[height=0.9in]{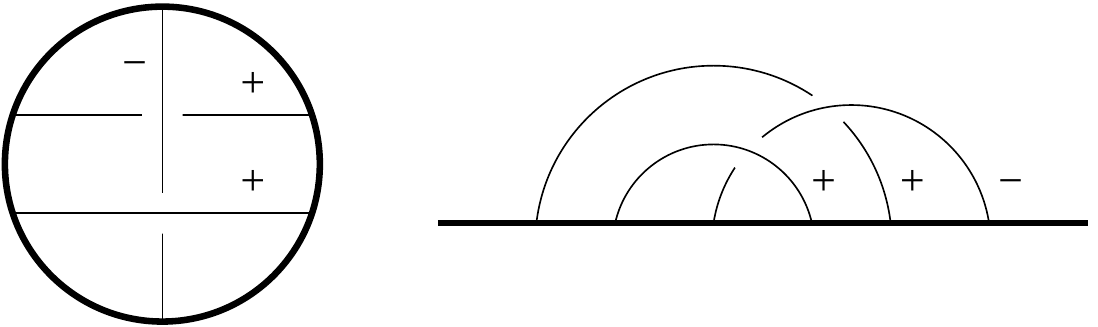}
\caption{Compact and long clasp diagrams}
 \label{fig:clasp_diagrams}
\end{figure}

We will omit signs of the chords from the pictures of clasp diagrams when these are not relevant. When drawing parts of clasp diagrams we will always assume that the chords shown on the picture have consecutive heights and that the marked points lie on the omitted part of the circle. The skeleta of long clasp diagrams will be often drawn as straight lines.

The chords of a long clasp diagram with $n$ chords have another natural order on them; namely, the order of their left endpoints along the skeleton. We will enumerate the chords in this order by $1,2,\dots,n$.

A long clasp diagram can be encoded by an $n\times n$ matrix. 
For a pair $i$, $j$ of intersecting chords with $i$ passing over $j$ define $l_{ij}=1$ if 
$i<j$ and  $l_{ij}=-1$ if $i>j$. Set all remaining elements of $L$ to be $0$. 
Let also $\E$ be the diagonal matrix encoding the signs of all chords, that is, with 
$\eps_{ii}=\pm 1$ being the sign of the chord $i$. 
The matrices $L$ and $\E$ (or the matrix $L+\E$) determine the diagram uniquely unless its chords can be separated into two nonempty subsets so that the chords from different subsets are disjoint.

The \emph{mirror} $D^*$ of a clasp diagram $D$ is the diagram obtained from $D$ by inverting the height and all the signs simultaneously. %The operation of taking the mirror commutes with the connected sum:
%$$(A\# B)^* = A^*\# B^*.$$

\subsection{Realizing knots by clasp diagrams}\label{subsec:realization}
When the chords of a compact clasp diagram are replaced by linked pairs of strings as shown below, the clasp diagram gives rise to a knot: 
$$\includegraphics[height=1.3in]{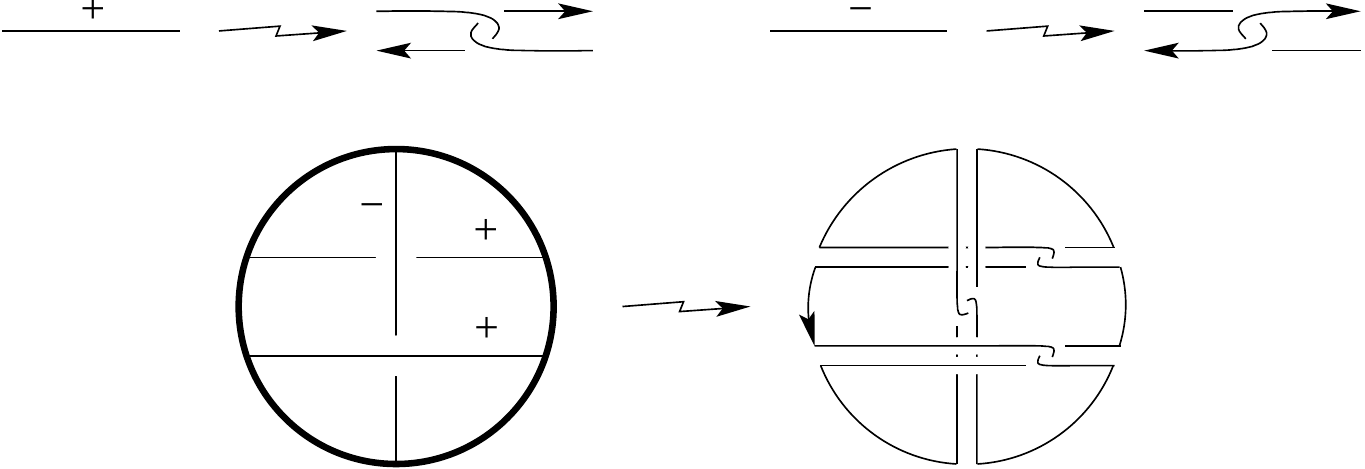}$$
The $\pm$ sign of a chord determines whether the pair of strings has linking number 
$\pm1$. 
We say that a knot obtained in this fashion from a clasp diagram \emph{realizes} it, and that the corresponding diagram \emph{represents} the knot. Long knots are represented by long clasp diagrams while compact knots realize compact diagrams.
Examples of knots realizing some clasp diagrams with two and three chords are shown in Figure \ref{fig:knot_table}.

Similarly, a string link clasp diagram gives rise to a string link:
$$\includegraphics[height=0.6in]{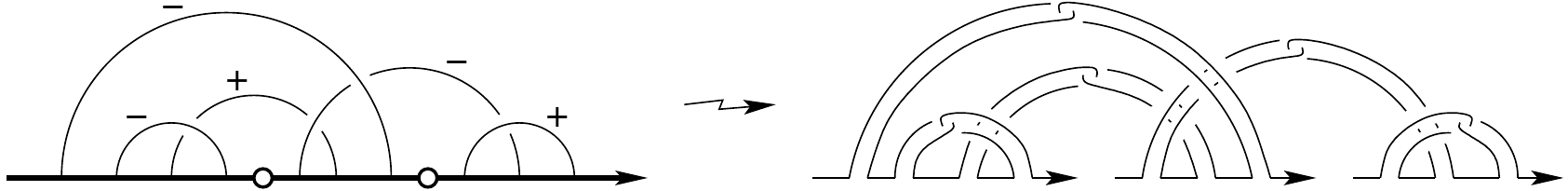}$$
Here, in order to make sense out of the right-hand side of the figure, it should be assumed that the $k$th component of the string link lies strictly below the $k+1$st component everywhere apart from small neighbourhoods of the crossings, for $k=1,2$.

Note that the mirror operation on clasp diagrams represents the mirror image operation on knots. 

\begin{figure}[htb]
%\vspace{1in}
\includegraphics[height=1.5in]{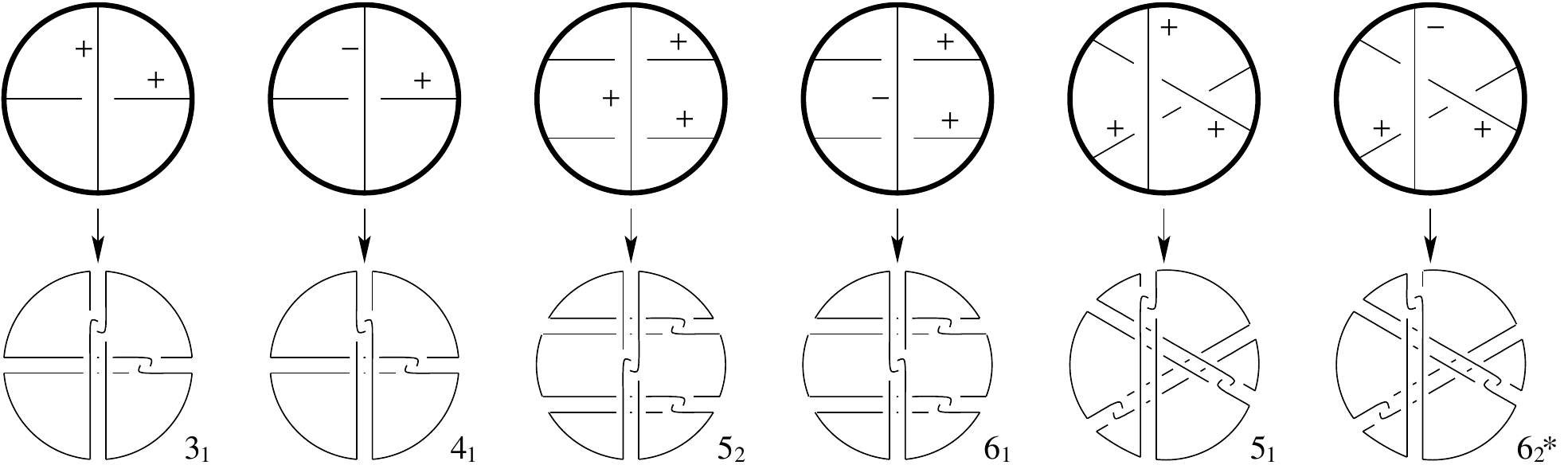}
\caption{Knots corresponding to clasp diagrams}
\label{fig:knot_table}
\end{figure}

\begin{thm}\label{thm:representation}
Each isotopy class of knots can be represented by a clasp diagram. 
\end{thm}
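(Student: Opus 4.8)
The plan is to construct a clasp diagram for $K$ out of a planar diagram, using the classical fact that any knot diagram becomes descending — hence represents the unknot — after switching a suitable set of crossings, and then realizing the ``difference'' between $K$ and the unknot one clasp at a time.

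First I would pass to long knots, which is harmless by the identification of the various notions of knot recalled in the introduction. Fix a generic diagram $D$ of the long knot $K$ together with a basepoint near one end at infinity. Traversing $D$ from the basepoint, call a crossing \emph{bad} if its understrand is reached before its overstrand; switching all the bad crossings turns $D$ into a descending diagram, hence into a diagram of the trivial long knot. Enumerate the bad crossings $c_1,\dots,c_n$ in the order in which their understrands are first met.

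I would then build the clasp diagram inductively, one chord per bad crossing. The inductive step is a local lemma: if a long knot $K'$ is already realized by a long clasp diagram $D'$, then switching a single crossing of $K'$ can be realized by inserting one new chord into $D'$, placed at an extreme height, with endpoints on the two arcs of the skeleton meeting at the crossing and sign determined by the crossing. The geometric idea is that the crossing switch is effected by a finger move that hooks a small tongue of one strand around a small tongue of the other — a clasp — followed by a Reidemeister~II cancellation; the bookkeeping then amounts to checking that the clasp produced by the realization procedure of Subsection~\ref{subsec:realization}, with its prescribed ``local'' routing of the two new arcs, is exactly the one required, and that the height of the new chord may be taken larger (or smaller) than all the others. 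Granting the lemma, one starts from the empty clasp diagram, which realizes the trivial long knot obtained by switching all bad crossings of $D$, and then reinstates $c_1,\dots,c_n$ one at a time; each reinstatement is a crossing switch and costs one chord, so after $n$ steps one obtains a clasp diagram realizing $K$.

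The principal difficulty is exactly this local lemma, and in particular the point that introducing a clasp a priori increases the crossing number by two, so ``realize a crossing switch by one clasp'' is not a purely local statement about the two strands: it relies on an ambient Reidemeister~II move being freed up by the finger move, and on this being compatible with the rigidity of how a clasp diagram forces the two new arcs to be routed. If arranging this cleanly turns out to be awkward, the natural fallback is a direct construction: isotope $K$ into plat position, so that its complexity is concentrated in a braid capped off by trivial arcs, and realize the plat by a nested family of clasps, reading the height order off the levels of the braid and the signs off its crossings; the two routes differ only in how the bookkeeping is organized.
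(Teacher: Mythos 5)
Your overall strategy (switch the bad crossings to unknot the diagram, then reinstate them one clasp at a time) is genuinely different from the paper's proof, which presents the knot as a short-circuit closure of a pure braid and eliminates the inadmissible generators $A_{i,j}$ ($i$ even or $j$ odd) by explicit stabilizations; it is in fact close in spirit to Hugelmeyer's construction of descending diagrams. However, as written the argument has a real gap, and it sits exactly at the local lemma you flag. The result of realizing a crossing switch by a clasp is not determined by the knot $K'$ together with the two points of its parametrization involved in the crossing: it depends on the isotopy class of the band (the small finger at the crossing site) along which the clasp is inserted. In your induction the crossing $c_{i+1}$ is a crossing of the original planar curve, while $K_i$ has been re-presented as the standard realization of the clasp diagram $D_i$; the isotopy between these two presentations drags the little band at $c_{i+1}$ to an a priori complicated band in the complement, and you give no argument that this band can be taken to the standard position a new chord would impose (running along an embedded chord of the disk, at extreme height). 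The lemma cannot hold in the strong form you state: two crossing switches between the same pair of points of $K'$ but performed along different bands generally yield different knots, whereas adding a single chord with prescribed endpoints, sign and extreme height yields one well-defined knot. So the band must be part of the lemma's data, and the real content of the theorem becomes the bookkeeping that all the bands arising from the bad crossings of a descending diagram can be simultaneously standardized --- equivalently, that after attaching one small clasp at each bad crossing of the descending (hence trivial) diagram, the straightening isotopy of the unknot can be performed so that the clasps end up along chords of a disk with consistent heights. That is precisely what is missing, and it is where the work lies; the plat-position fallback is likewise only a sketch. The paper sidesteps all of this by working with braid words, where the passage to a clasp diagram is purely combinatorial and the only issue is eliminating generators of the wrong parity.
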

We will prove this statement for long knots and diagrams; as a consequence, it also holds for compact knots and diagrams. 

A clasp diagram representing a knot may be chosen so as to be of a special form. 
A long clasp diagram is \emph{descending}, if its chords are numbered in the order inverse to the order by height. In other words, for each pair of chords, the left end of the upper chord lies to the left of the left end of the lower chord.

\begin{thm}\label{thm:descending}
Each isotopy class of knots can be represented by a descending clasp diagram.
\end{thm}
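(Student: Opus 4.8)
The plan is to deduce Theorem~\ref{thm:descending} from Theorem~\ref{thm:representation} by a normalization procedure. Fix any long clasp diagram $D$ realizing the knot $K$ and number its chords $1,\dots,n$ by the left-to-right order of their left endpoints. By definition $D$ is descending exactly when its height order is the reverse of this numbering; call a pair $i<j$ of chords \emph{inverted} when, on the contrary, $i$ lies lower than $j$ in height. Since a clasp diagram in which no height-consecutive pair is inverted is automatically descending (the descending condition propagates by transitivity along the height order), it suffices to remove inverted pairs that are consecutive in height, processing them one at a time.

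The easy half concerns disjoint chords. If two chords are disjoint in the disk, their clasps occupy disjoint regions of space, so their relative height is irrelevant to the realized knot; hence two disjoint, height-consecutive chords may have their heights exchanged, and if the pair was inverted this strictly decreases the number of inverted pairs while affecting no other pair. After all such exchanges we are reduced to the case in which every height-consecutive inverted pair consists of two \emph{intersecting} chords.

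The core of the argument is this intersecting case: two intersecting chords $i<j$ with $i$ immediately below $j$ in height must be reordered without changing the realized knot. A bare height-swap will not do, since it performs a crossing change on $K$ where the two clasps meet; instead one replaces the local configuration by an equivalent one in which the heights are already in the right order, necessarily at the price of some auxiliary chords (the moves of Subsection~\ref{subsec:moves} are designed precisely to make such replacements available, in the same spirit in which a crossing change is absorbed by a clasp). The delicate point is that this may raise the chord count, so the count of inverted pairs need not decrease by itself. To make the induction go through one needs a finer well-founded complexity --- for instance a lexicographic quantity that first records how far the chords destined to be the highest are from their correct positions, and only then counts the remaining inverted pairs and chords --- and one must check that it strictly drops at each step; iterating until it vanishes produces a descending diagram for $K$.

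I expect this termination bookkeeping to be the main obstacle: one must guarantee that the auxiliary chords created when correcting an inverted intersecting pair can always be slid to a standard position and handled so as not to re-create inversions already fixed. An alternative route, which sidesteps the sorting, is to re-run the construction behind Theorem~\ref{thm:representation} (Sections~\ref{section:shortcircuit} and \ref{section:proof}) while keeping track of order: realize $K$ as a diagram of the unknot together with a prescribed set of crossing changes, and ``short-circuit'' those changes into clasps one at a time, in the order in which the crossings are met along $K$, so that the resulting chords come out already sorted by height in the required way. I would expect the same essential difficulty --- controlling the positions and heights of the inserted clasps --- to reappear in that approach as well.
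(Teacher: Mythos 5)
There is a genuine gap, and it sits exactly where you flag it: the case of an inverted, height-consecutive pair of \emph{intersecting} chords. You assert that the local configuration can be replaced by one with the heights corrected ``at the price of some auxiliary chords'', but you neither exhibit such a replacement nor define the well-founded complexity that is supposed to strictly decrease. Since the auxiliary chords are themselves generally intersecting chords, nothing in your sketch rules out that fixing one inversion creates new ones, so the process could fail to terminate; and appealing to the moves $\mathrm{A}$, $\mathrm{B}$, $\mathrm{C}_1$, $\mathrm{C}_2$, $\mathrm{C}_4$ (or to Theorem~\ref{thm:moves}) does not help here, because that theorem only says \emph{some} sequence of moves connects any two diagrams of the same knot, with no control over the intermediate diagrams, whereas your argument needs a concrete, monotone sorting procedure. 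As the reordering-with-termination step is the entire content of Theorem~\ref{thm:descending} beyond Theorem~\ref{thm:representation}, what you have is a plan rather than a proof.

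The paper sidesteps sorting altogether: Theorems~\ref{thm:representation} and \ref{thm:descending} are proved simultaneously through the short-circuit closure of pure braids (Section~\ref{section:shortcircuit}). Every knot is the short-circuit closure of a word in the generators $A_{i,j}$ of $P_\infty$; using the stabilization maps $\tau_k$ and six explicit rewrites one eliminates all ``inadmissible'' generators, keeping only the $A_{2r-1,2s}$ with odd first and even second index, and the word is kept (or restored to) Artin combed form --- the two rewrites that break combedness are harmless because the offending generator commutes with everything below it. The key observation is then that the short-circuit closure of a combed word in admissible generators is \emph{automatically} a descending clasp diagram, so the termination bookkeeping you identify as the main obstacle is absorbed into the classical combing of pure braids, i.e.\ the semidirect-product decomposition of $P_m$, rather than handled by an ad hoc lexicographic measure. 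Your alternative suggestion --- re-running the construction of Theorem~\ref{thm:representation} while keeping track of the order of insertion --- is in fact close in spirit to the paper's route, and the difficulty you expect to reappear there does not, precisely because combing does that bookkeeping for you.
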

In fact, this statement is also true for string links and the proof, which we omit, is essentially the same.

\subsection{Moves on clasp diagrams}\label{subsec:moves}
Different \clasp diagrams may produce equivalent knots. In particular, the following transformations do not alter the isotopy type of the corresponding knot:
\begin{itemize}[align=parleft, labelsep=0.9cm, leftmargin=*]
\item[(A)\,  ] an exchange of the order of two non-intersecting chords with consecutive heights;
\item[(B)\,  ] a cyclic shift of the order (heights) of the chords;
\item[($\mathrm{C}_1$)] erasing an isolated chord:
$$\includegraphics[height=0.5in]{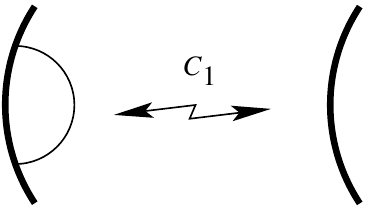}$$
\item[($\mathrm{C}_2$)] erasing a pair of parallel chords with consecutive heights and opposite signs:
$$\includegraphics[height=0.5in]{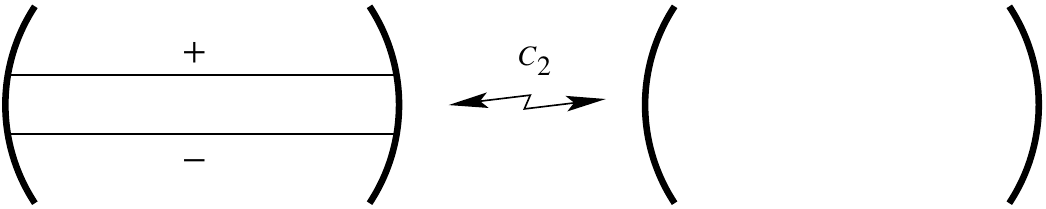}$$
\item[($\mathrm{C}_4$)] the four-clasp move:
$$\includegraphics[height=1.0in]{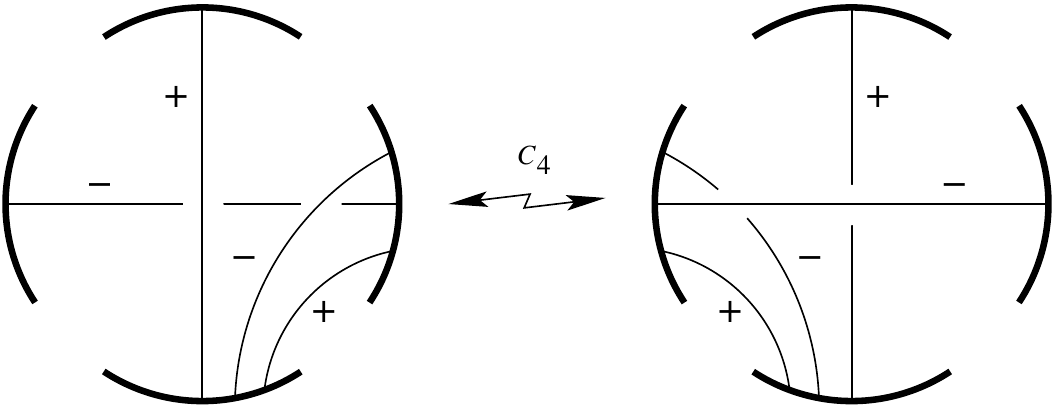}$$
\end{itemize}

Indeed, the move A produces exactly the same knot projection. The isotopy corresponding to the move B consists in rotating the highest pair of linked strings around the knot:
\smallskip

$$\includegraphics[height=1in]{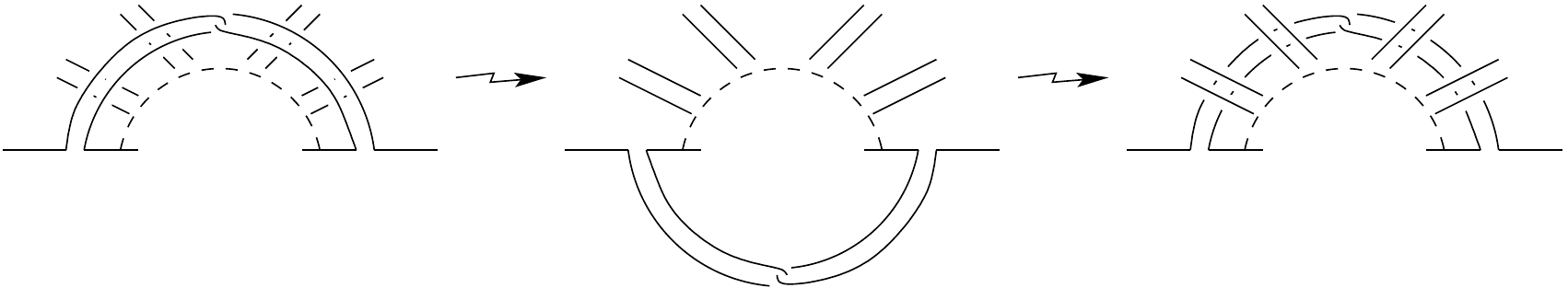}$$
The realization of the move $\mathrm{C}_1$ follows from the first Reidemeister move:
\smallskip

$$\includegraphics[height=0.3in]{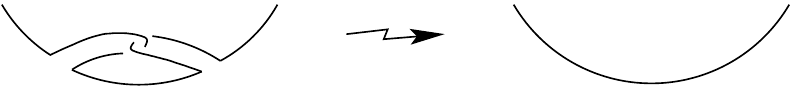}$$
Similarly, the realization of  $\mathrm{C}_2$ follows from the second Reidemeister move:
\smallskip

$$\includegraphics[height=0.5in]{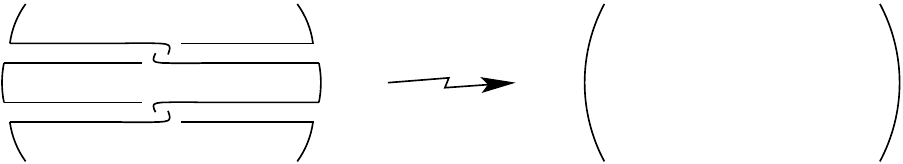}$$
The four-clasp move gets translated into the following  equivalence of knot projections:

\smallskip

$$\includegraphics[height=0.9in]{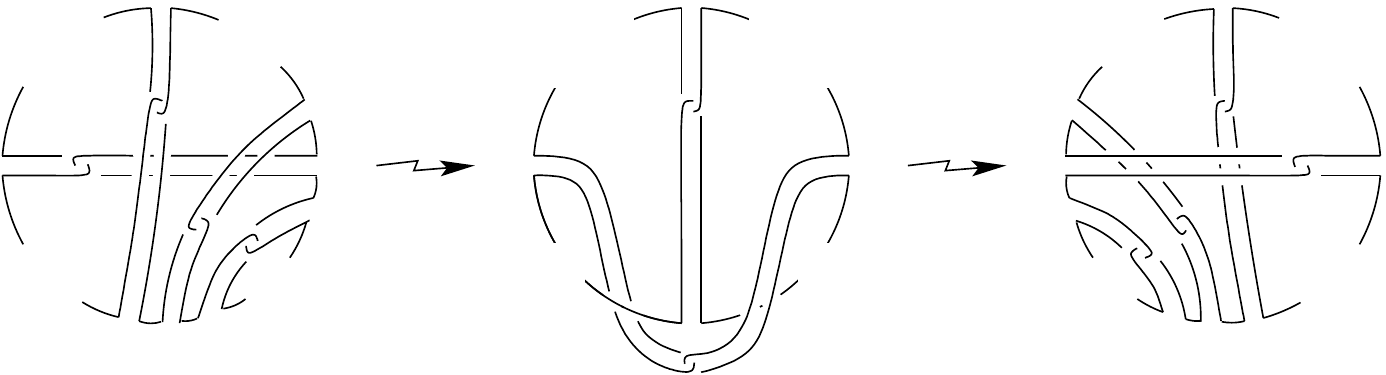}$$

\begin{thm}\label{thm:moves}
Any two clasp diagrams that give rise to isotopic knots are related by  a finite sequence of moves 
{\rm A, B}, $\mathrm{C}_1$, $\mathrm{C}_2$ and $\mathrm{C}_4$. 
\end{thm}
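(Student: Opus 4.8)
The plan is to reduce Theorem~\ref{thm:moves} to a statement about knot diagrams and ordinary Reidemeister moves, using Theorems~\ref{thm:representation} and~\ref{thm:descending} as the bridge between the combinatorial and the topological pictures. First I would show that the moves $A$, $B$, $C_1$, $C_2$, $C_4$ suffice to bring any clasp diagram into a normal form, namely a descending one (this is essentially the content of the proof of Theorem~\ref{thm:descending}, which must be carried out constructively by moves rather than merely by an isotopy argument). Thus it is enough to prove that two \emph{descending} clasp diagrams realizing isotopic knots are connected by the moves. For this, I would fix the standard recipe that turns a clasp diagram into a knot diagram and, conversely, set up a procedure that turns an arbitrary knot diagram into a descending clasp diagram: take a diagram, make it descending as a knot diagram (so it becomes an unknot diagram after changing nothing but reading crossings in order), and then encode each crossing — or rather each pair of crossings that a single clasp contributes — as a signed chord with the appropriate height. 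This is where Section~\ref{section:shortcircuit} (the ``short-circuit'' construction referenced in the introduction) presumably does the real work.

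The core of the argument is then a Reidemeister-style bookkeeping: since any two knot diagrams of isotopic knots differ by a finite sequence of planar isotopies and Reidemeister moves $R_1, R_2, R_3$, it suffices to check that each such move on the knot-diagram side is realized, on the clasp-diagram side, by a finite sequence of the moves $A$, $B$, $C_1$, $C_2$, $C_4$ (together with the move $B$'s role in cyclically repositioning heights so that the relevant chords become height-consecutive, as our picture conventions require). Concretely: a planar isotopy of the knot diagram that does not change the cyclic order of clasp-feet corresponds to move $A$ (and to nothing, when it is a genuine ambient isotopy of the disk); $R_1$ on a clasp corresponds to $C_1$; the cancellation of a pair of oppositely-signed claspable crossings corresponds to $C_2$; and the genuinely three-dimensional moves — sliding one clasp through another, or past an endpoint — must be shown to reduce to $C_4$ together with $A$ and $B$. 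I would organize this as a finite case analysis over the local types of moves, each time drawing the before/after knot pictures (as already done for the five moves in the excerpt) and exhibiting the clasp-diagram translation.

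The main obstacle, I expect, is \textbf{completeness of the move list} — specifically, verifying that $C_4$ (supplemented by $A$ and $B$) captures \emph{all} the ways two claps can interact, including the ``commutation'' relations coming from braid-like slides and the subtler identities analogous to the pure-braid relations (the paper's own analogy with combed pure braids suggests that something like the Jacobi/Hall–Witt identity is lurking). A descending clasp diagram is, after all, a combed pure-braid-like object, and the danger is that reducing two such normal forms to each other requires a move not on the list; one must show that any such putative extra relation is a consequence of $A$, $B$, $C_1$, $C_2$, $C_4$. I would handle this by the standard strategy: take an isotopy between the two knots, put it in general position so that it decomposes into finitely many elementary moves, and for the single non-obvious elementary move (a clasp passing through a clasp) prove a normal-form lemma showing the result is $C_4$-equivalent to the original — this lemma is the technical heart and is presumably what Section~\ref{section:proof} is devoted to.
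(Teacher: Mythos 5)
There is a genuine gap at the heart of your plan: the step ``check that each Reidemeister move on the knot-diagram side is realized, on the clasp-diagram side, by a finite sequence of the moves'' is not well-posed. The realization of a clasp diagram is a very special kind of knot diagram (a disk with clasped bands), and the intermediate diagrams in a generic Reidemeister sequence between two such realizations are \emph{not} realizations of any clasp diagram, so there is nothing on the clasp side to compare them to. To make your strategy work you would need a map from arbitrary knot diagrams to clasp diagrams, well defined up to the moves A, B, $\mathrm{C}_1$, $\mathrm{C}_2$, $\mathrm{C}_4$, and then prove its invariance under $R_1$, $R_2$, $R_3$; constructing and controlling such a map is essentially the entire difficulty (it is the ``realizability bookkeeping'' problem that makes the analogous Gauss-diagram statement and Hugelmeyer's approach hard), and your sketch of it --- ``encode each crossing, or each pair of crossings that a single clasp contributes, as a signed chord'' --- presupposes that the diagram is already in clasp form, which a general diagram is not. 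Likewise, your proposed normal-form lemma for ``a clasp passing through a clasp'' assumes the isotopy can be stratified into clasp-level events, but a general-position isotopy decomposes into Reidemeister moves on diagrams, not into moves on clasp realizations; you would have to develop that stratification from scratch. Even your first step is not free: making an arbitrary diagram descending by the listed moves is itself nontrivial and, in effect, requires the commutation-type consequences of the moves that constitute most of the work.

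The paper avoids all of this by never touching Reidemeister moves. It views every clasp diagram as the short-circuit closure of a pure braid word, invokes the known theorem (Mostovoy--Stanford) that two pure braids have isotopic short-circuit closures if and only if they differ by pure braid relations and short-circuit moves, translates the short-circuit moves into $\mathrm{C}_1$ and $\mathrm{C}_2$ and the Margalit--McCammond relations into a finite list of clasp moves $\mathrm{M}_1$--$\mathrm{M}_5$ and their mirrors, and then derives each $\mathrm{M}_i$ and $\mathrm{M}_i^*$ from A, B, $\mathrm{C}_1$, $\mathrm{C}_2$, $\mathrm{C}_4$ by explicit diagrammatic computations. If you want to salvage your approach, you should either import that braid-theoretic completeness theorem as your external input (at which point you are reproducing the paper's proof) or supply the missing well-defined knot-diagram-to-clasp-diagram translation together with its Reidemeister invariance, which is a substantial independent project.
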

Here, again, on can consider either long or compact diagrams. This statement also holds for string links, with the same proof.

\subsection{Diagrams enhanced by Y-graphs}
Let us briefly mention a generalization of the notion of a clasp diagram that will not be used elsewhere in this paper. 

In the Goussarov's version \cite{G} of the Goussarov-Habiro theory, the insertion of a clasp into a knot diagram is a result of an \emph{I-modification}. The central role in \cite{G} belongs to \emph{Y-modifications}, and these can also be reflected in clasp diagrams.

Consider clasp diagrams where, in addition to chords, one can have Y-graphs connecting triples of distinct points on the skeleton of the diagram. If a chord represents a clasp, a Y-graph stands for a copy of the Borromean rings, with each component connected to a point on the skeleton. Essentially, a Y-graph is a shorthand for the configuration of four consecutive chords as in Figure~\ref{fig:borro}.
\begin{figure}[htb]
%\vspace{1.0in}
\includegraphics[height=0.8in]{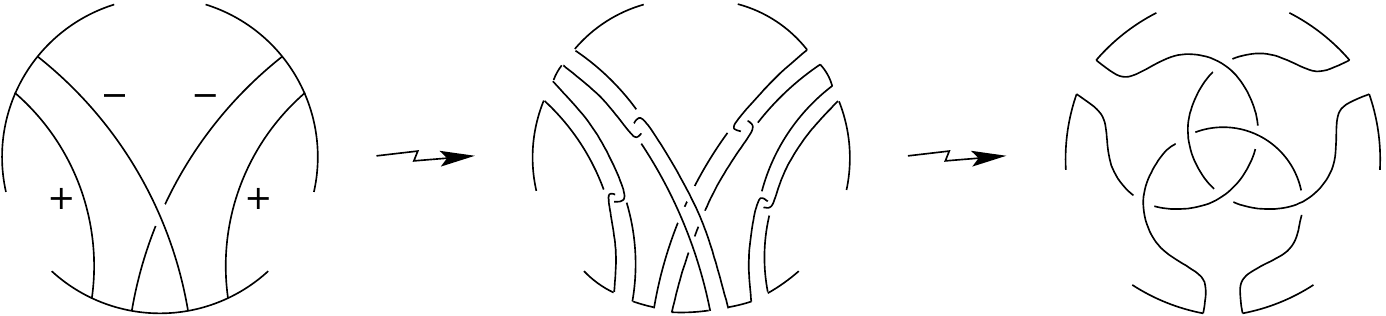}
\caption{A configuration of four chords which corresponds to a Y-graph}
\label{fig:borro}
\end{figure}

In Section~\ref{section:shortcircuit} we will see that chords in a clasp diagram correspond to generators (or their inverses) in the braid groups. In these terms, a Y-graph corresponds to a commutator of two generators.

\section{Seifert surfaces and the Alexander polynomial}\label{section:alex}
Let $D$ be a clasp diagram with $n$ chords. Consider a knot diagram realizing $D$. 
It is easy to see that it has a natural genus $n$ Seifert surface with $n$ flat bands glued to 
the disk along the chords of $D$ and $n$ short $\pm 1$-twisted bands 
(resulting from clasps) glued across them near the middle, see Figure~\ref{fig:Seifert}. 

\begin{figure}[htb]
%\vspace{1.0in}
\includegraphics[height=0.8in]{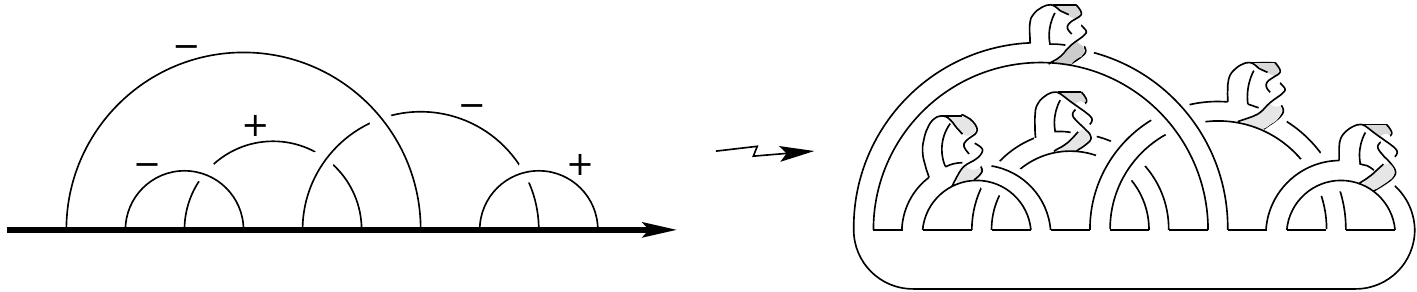}
\caption{A Seifert surface corresponding to a clasp diagram}
\label{fig:Seifert}
\end{figure}

Let $L$ and $E$ be the linking and the sign matrices of $D$, respectively, see Section~\ref{subsec:clasps}. 
Define an $n\times n$ matrix $S_D$ with coefficients in $\Z[t^{-1},t]$ as follows. 
For each pair $i$, $j$ of linked chords of $D$ with $i$ passing over $j$ define $s_{ij}=t-1$, 
$s_{ji}=t^{-1}-1$ if $i<j$;  and  $s_{ij}=1-t$, $s_{ji}=1-t^{-1}$ if $i>j$. 
Set $s_{ii}=-e_{ii}$ for all $i$ and let all remaining elements of $S$ to be $0$.

\begin{prop}
\label{prop:Alexander}
The Alexander polynomial $\Delta_K (t)$ of the knot $K$ corresponding to $D$ satisfies
$$\Delta_K(t)=\det S_D\,.$$ 
\end{prop}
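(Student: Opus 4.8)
The plan is to compute a Seifert matrix for the genus-$n$ surface $F$ associated with $D$ in the paragraph preceding the proposition, and to show that the standard formula expressing $\Delta_K$ as a determinant of size $2n$ collapses, after one Schur complement, to the $n\times n$ determinant $\det S_D$. Conceptually, each of the $n$ clasps contributes a handle to $F$ whose new homology generator is $\pm1$-framed; this generator can be eliminated from the presentation of the Alexander module, leaving a presentation with one generator per chord and presentation matrix $S_D$, and the Schur complement is the concrete realization of this elimination.

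Concretely, I would fix a basis of $H_1(F;\Z)\cong\Z^{2n}$ adapted to the bands: for each chord $i$, a loop $\alpha_i$ running once along the flat band glued along chord $i$ and closing up across the central disk, and a loop $\delta_i$ running once around the short $\pm1$-twisted band produced by the $i$-th clasp and closing up across that flat band. These $2n$ loops form a symplectic basis, with $\alpha_i\cdot\delta_i=\pm1$ and all other intersection numbers zero (reconfirming $\mathrm{genus}(F)=n$). Ordering the basis as $\alpha_1,\dots,\alpha_n,\delta_1,\dots,\delta_n$, I would read the Seifert matrix $V$ off the local pictures block by block. The crucial block is the one of $\delta$--$\delta$ linkings: the twisted bands of different clasps can be isotoped into disjoint balls, so this block is diagonal, with $i$-th entry the self-framing $\mp\eps_{ii}$ of a $\pm1$-twisted band; hence this block equals $\pm E$, which is invertible with unit determinant. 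The remaining blocks — the $\alpha$--$\alpha$ linkings, and the $\alpha$--$\delta$/$\delta$--$\alpha$ linkings, which differ by the intersection matrix $\pm I$ — record how the flat bands pass over and under one another and how a flat band threads a clasp, and are therefore expressed in terms of the matrix $L$. The four coefficients $t-1,\ t^{-1}-1,\ 1-t,\ 1-t^{-1}$ occurring in $S_D$ should arise as the four cases of combining whether the over-chord has the smaller or the larger index with the two relative heights, precisely the combinations that enter the definitions of $L$ and $\eps$.

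Then I would invoke $\Delta_K(t)=\det(V-tV^{\mathrm T})$, valid up to a unit $\pm t^{k}$ of $\Z[t^{\pm 1}]$. Since the $\delta$--$\delta$ block of $V$ is $\pm E$, the corresponding block of $V-tV^{\mathrm T}$ is $\pm(1-t)E$, which is invertible over $\Q(t)$; taking the Schur complement with respect to it rewrites $\det(V-tV^{\mathrm T})$ as $\det\bigl(\pm(1-t)E\bigr)$ times the determinant of an $n\times n$ matrix over $\Q(t)$ assembled from the other three blocks. Substituting the explicit blocks and simplifying with $E^{2}=I$ and the intersection relations, the powers of $(1-t)$ cancel and, after clearing denominators, this $n\times n$ determinant equals $\det S_D$ with $S_D=-E+(t-1)L+(t^{-1}-1)L^{\mathrm T}$ (the matrix of the proposition). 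Thus $\Delta_K(t)=\det S_D$ up to a unit of $\Z[t^{\pm 1}]$.

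Finally, to obtain the equality on the nose with the usual normalization, I would observe that $S_D(t^{-1})=S_D(t)^{\mathrm T}$, since transposition exchanges $L$ with $L^{\mathrm T}$ and fixes $E$; therefore $\det S_D$ is symmetric under $t\leftrightarrow t^{-1}$ and $\det S_D|_{t=1}=\det(-E)=\pm1$, so $\det S_D$ is, up to sign, the standard symmetric representative of $\Delta_K$. The step I expect to be the main obstacle is the middle one: determining all blocks of $V$ from the clasp conventions with every sign correct, and verifying that the Schur reduction produces $S_D$ itself and not merely a matrix with the same determinant up to units.
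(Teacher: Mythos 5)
Your proposal is correct and follows essentially the same route as the paper's proof: the paper chooses the band-adapted basis, reads off the Seifert matrix $V=\left(\begin{smallmatrix} L & 0\\ I & -E\end{smallmatrix}\right)$, and applies the block-determinant (Schur complement) formula to $\det\bigl(t^{1/2}V-t^{-1/2}V^{\mathrm T}\bigr)$, which directly yields $\det S_D$ up to the irrelevant sign $\det(-E)$. The one detail to adjust in your sketch is that the mixed $\alpha$--$\delta$ blocks are not ``expressed in terms of $L$'': with the pushoff chosen as in the paper they are simply $0$ and $I$ (only the $\alpha$--$\alpha$ block is $L$), and working with the symmetrized form $t^{1/2}V-t^{-1/2}V^{\mathrm T}$ rather than $V-tV^{\mathrm T}$ makes your closing normalization argument unnecessary.
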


\begin{proof}
After choosing the basis in the first homology of this surface as the cores of all bands 
(with cores of long bands with the counterclockwise orientation taken first, in the order 
of their left endpoints) we obtain the corresponding  $2n\times 2n$ Seifert matrix 
$$V=\left(\begin{array}{c|c}L & 0\\ \hline I & -E\end{array}\right)$$ 
made of four $n\times n$ blocks, with the diagonal blocks being $L$ and $-E$, the lower off-diagonal 
block being the identity $n\times n$ matrix, and the upper off-diagonal block being zero.
%The Alexander polynomial of $K$ is $\Delta_K(t)=\det(t^{1/2}V-t^{-1/2}V^T)$. 
Recall that the determinant of a block matrix 
$$\left(\begin{array}{c|c}A & B\\ \hline C & D\end{array}\right)$$ 
with an invertible block $D$ equals $\det(A-BD^{-1}C)\cdot\det D$; thus 
$$\Delta_K(t)=\det(t^{1/2}V -t^{-1/2}V^T)=(t^{1/2} -t^{-1/2})^n 
\det \left( t^{1/2}L -t^{-1/2}L^T-(t^{1/2} -t^{-1/2})^{-1}E\right)\cdot\det(-E) \,. $$ 
Since $\Delta_K(t)$ is defined up to a sign and a multiplication by $t$, 
we can disregard the sign $\det(-E)$. 
The theorem follows from a straightforward identification of the matrix 
$(t^{1/2} -t^{-1/2}) \left( t^{1/2}L -t^{-1/2}L^T\right)-E$ with $S_D$.
\end{proof}

\begin{exa}
For the clasp diagram of the $6_1$ knot in Figure \ref{fig:knot_table} 
put a base point just before the negative chord. Then we have 
$$
L=\left(\begin{array}{c c c} 0 & 1 & 1 \\ 0 & 0 & 0 \\ 0 & 0 & 0 \end{array}\right)\ ,\quad 
E=\left(\begin{array}{c c c} -1 & 0 & 0 \\ 0 & 1 & 0 \\ 0 & 0 & 1 \end{array}\right)\ ,\quad
S_D=\left(\begin{array}{c c c} 1 & t-1 & t-1 \\ t^{-1}-1 & -1 & 0 \\ t^{-1}-1 & 0 & -1 \end{array}\right).$$ 
Thus the Alexander polynomial of $6_1$ is $\Delta_{6_1}(t)=\det S_D=5-2t^{-1}-2t$.
\end{exa}

\begin{rem}
Another way to calculate the Alexander-Conway (or, more generally, the HOMFLYPT) polynomial from $D$ is 
by using the skein relation. Applying the skein relation to each clasp, we either remove it (by a crossing change), 
or change the clasped ribbon of $K$ to a standard straight ribbon (by smoothing). 
Thus the HOMFLYPT polynomial of $K$ can be directly calculated from its values on links constructed by adding 
ribbons along different subsets of chords of $D$.   
This leads to new formulae for the finite type invariants obtained from the HOMFLYPT polynomial.  
\end{rem}

\section{Clasp diagrams and Vassiliev invariants} \label{section:vassiliev} 

Clasp diagrams provide a particularly neat approach to the Vassiliev invariants. The Vassiliev invariants are precisely those that can be calculated as the numbers of subdiagrams (taken with certain weights) of a clasp diagram of a knot. The constructions of this section follow the arguments of \cite{GPV}; however, the results that we present here are considerably more satisfactory. As usual in the theory of the finite type invariants, the term ``singular knot with $n$ double points'' will be used to denote a linear combination of $2^n$ knots obtained from a curve with $n$ transversal self-intersections by applying the Vassiliev skein relations $n$ times. By a Vassiliev invariant of order $n$ we will mean an invariant that vanishes on all singular knots with more than $n$ double points (some authors use in this situation the terminology ``Vassiliev invariant of order \emph{at most} $n$'').  For the basics of the theory of Vassiliev invariants, see \cite{CDM}.

\subsection{The Goussarov Theorem}

Denote by $\D$ the set of all clasp diagrams and let $\Z\D$ be the free abelian group spanned by $\D$. Consider the map
$I: \Z\D\to\Z\D$ which
sends a diagram to the sum of all of its subdiagrams. For instance,
$$\includegraphics[height=0.6in]{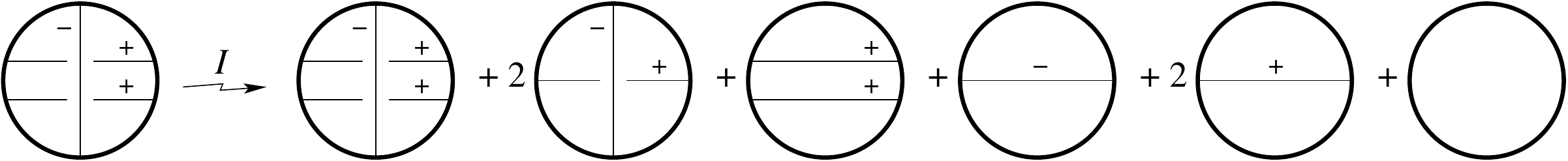}$$

Differences $D-D'$ of pairs of diagrams related by the  moves {\rm A, B}, $\mathrm{C}_1$, $\mathrm{C}_2$ and $\mathrm{C}_4$ span the subgroup $R\subset \Z\D$; the quotient $\Z\D/R$ is the free abelian group $\Z\mathcal{K}$ generated by the isotopy classes of knots. Denote by $\A$ the quotient $\Z\mathcal{D}/I(R)$; the map $I$ induces an isomorphism $$\Z\mathcal{K}\simeq \A.$$
Write $\A_n$ for the quotient of $\A$ by all clasp diagrams with more than $n$ chords and $I_n$ for the corresponding map $\Z\mathcal{K}\to \A_n$.

\begin{thm}\label{thm:completeVI}
Let $M$ be an abelian group and $n$ a non-negative integer. Given a homomorphism $w:\A_n\to M$, the composition $w\circ I_n$ is a Vassiliev invariant of order $n$. All Vassiliev invariants of order $n$ are obtained in this way.
\end{thm}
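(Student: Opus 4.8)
The plan is to establish the two assertions separately: that $w\circ I_n$ is always a Vassiliev invariant of order $n$, and that every such invariant arises this way. For the first part, I would start from the observation that $w\circ I_n$ is a well-defined knot invariant because $I$ induces the isomorphism $\Z\mathcal{K}\simeq\A$ and $\A_n$ is a quotient of $\A$. To check the order bound, I want a formula for the value of $w\circ I_n$ on a singular knot with $n+1$ double points. Using Theorem~\ref{thm:descending}, represent the ambient knot by a descending clasp diagram; then a double point can be realized by a small ``short-circuiting'' configuration, i.e.\ by a clasp whose sign has been left undetermined, so that the Vassiliev skein relation at that double point corresponds to the difference $D_+ - D_-$ of the two sign choices. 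The key algebraic point is that in $\Z\D/I(R)$ this difference is congruent to (twice) the diagram with that chord simply \emph{erased} together with lower-order correction terms; more precisely, applying $I$ and using the moves $\mathrm{C}_1$, $\mathrm{C}_2$, one shows $I(D_+ - D_-)$ is a combination of subdiagrams each of which omits the ambient clasp corresponding to that double point. Iterating this over all $n+1$ double points, $I_n$ of a singular knot with $n+1$ double points is a sum of subdiagrams each missing at least $n+1$ chords, hence involving at most (total chords) $-(n+1)$ chords; but the construction can be arranged so that the total number of chords used is exactly $n+1$ plus the resolved part, forcing every surviving subdiagram to have more than $n$ chords — wait, I need to be careful here, so let me instead argue directly: each double-point resolution kills one chord modulo $I(R)$, so $I_n$ applied to an $(n+1)$-singular knot lands in the span of diagrams with at least one extra chord beyond what survives, and a counting of chords shows all terms have $>n$ chords and hence vanish in $\A_n$.

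For the converse, let $v$ be an arbitrary Vassiliev invariant of order $n$, viewed as a homomorphism $\Z\mathcal{K}\to M$. Transporting along the isomorphism $\Z\mathcal{K}\simeq\A$, we get a homomorphism $\bar v:\A\to M$, i.e.\ $v = \bar v\circ I$ where now $I:\Z\mathcal{K}\to\A$ is the induced map (abusing notation). I need to show $\bar v$ descends to $\A_n$, that is, $\bar v$ vanishes on every clasp diagram with more than $n$ chords. Fix a clasp diagram $D$ with $m>n$ chords. The point is that $D$, regarded as an element of $\A$, equals $I$ of the alternating sum $\sum_{D'\subseteq D}(-1)^{m-|D'|}[K_{D'}]$ of the knots realizing its subdiagrams (Möbius inversion of $I$). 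This alternating sum is, by the interpretation of subdiagram-erasure as crossing changes and smoothings on the clasps, precisely the image under the Vassiliev skein relation of a singular knot with $m$ double points, built from $K_D$ by collapsing each of the $m$ clasps to a double point. Since $v$ has order $n<m$, it vanishes on this element; hence $\bar v([D])=0$ in $M$. Therefore $\bar v$ factors through $\A_n$, giving the desired $w$, and by construction $w\circ I_n = v$.

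The main obstacle I anticipate is the precise bookkeeping in the first direction: making rigorous the claim that collapsing a clasp to a double point, at the level of $\Z\D/I(R)$, exactly corresponds to erasing the corresponding chord (up to lower-order terms), and then controlling the chord count so that an $(n+1)$-singular knot really does land in the span of diagrams with more than $n$ chords. This requires a clean ``short-circuit'' lemma identifying, for a descending clasp diagram, the local replacement that realizes a double point and verifying that the two sign-resolutions differ, modulo the moves, by a diagram with one fewer clasp — essentially the content promised by Section~\ref{section:shortcircuit}. Once that lemma is in hand, both directions run through the same Möbius-inversion identity $I^{-1}(D)=\sum_{D'\subseteq D}(-1)^{|D|-|D'|}D'$, and the theorem follows by matching this identity against the iterated Vassiliev skein relation.
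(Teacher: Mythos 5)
Your converse direction is essentially the paper's own argument and is fine: transport $v$ along $\Z\mathcal{K}\simeq\A$, use the inversion formula $I^{-1}(D)=\sum_{D'\subseteq D}(-1)^{|D|-|D'|}D'$, and observe that the alternating sum $\sum_{D'\subseteq D}(-1)^{|D|-|D'|}K_{D'}$ is, up to sign, a singular knot with $|D|$ double points (erasing a chord amounts to changing one crossing of the clasp), so an invariant of order $n<|D|$ kills it and the transported homomorphism descends to $\A_n$.

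The genuine gap is in the forward direction, and it begins with your local model for a double point. You model a double point as ``a clasp whose sign is left undetermined,'' so that the skein relation gives $D_+-D_-$, the difference of the two sign choices. That is not a single double-point resolution: writing $D_0$ for the diagram with that chord erased, $K_{D_+}-K_{D_-}=(K_{D_+}-K_{D_0})+(K_{D_0}-K_{D_-})$ is a \emph{sum of two} $1$-singular knots, which is exactly why you are driven to vague ``lower-order correction terms'' and why your chord count comes out incoherent (as you yourself note mid-argument). The correct model --- the paper's \emph{special chord} --- puts the double point at one of the two crossings of a clasp, so its two resolutions are ``clasp present'' and ``clasp absent''; in braid terms a special chord of sign $\eps$ stands for $\eps(A_{2r+1,2s}^{-\eps}-1)$. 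With this model no auxiliary lemma is needed: the very Möbius-inversion computation you wrote in the converse also does the forward direction, since for the singular knot $S$ with all $k$ chords of $D$ special one gets $I_n(S)=\sum_{D'\subseteq D}(-1)^{|D|-|D'|}I_n(K_{D'})=[D]$ by telescoping --- each double point \emph{forces} its chord to be present in every surviving subdiagram, the opposite of your slogan that ``each resolution kills one chord'' --- and $[D]=0$ in $\A_n$ when $k>n$. What you also need, and only gesture at, is that every singular knot with more than $n$ double points lies in the span of such $S$'s: this is Proposition~\ref{prop:singknots} (every singular knot is realized by a clasp diagram with special chords, obtained by rerunning the braid/short-circuit argument of Section~\ref{section:shortcircuit} verbatim), combined with the identity $K_{D}=\sum_{D''\subseteq D'\subseteq D}(-1)^{|D|^--|D'|^-}K_{\overline{D'}}$, which rewrites a diagram whose special chords form $D''$ in terms of diagrams with \emph{all} chords special, each having at least $|D''|$ chords. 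Once you replace your local model and add these two ingredients, your outline closes up and coincides with the paper's proof.
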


There is another way to state this result, in the spirit of the Goussarov Theorem 
\cite{GPV}. For a pair of  clasp diagrams $A$, $B$ denote by 
$\left< A,B\,\right>$
the number of subdiagrams of $B$ isomorphic to $A$. Write $K_B$
for the knot corresponding to the clasp diagram $B$. Then, we have the
following
\begin{cor}\label{thm:Goussarov}
For each integer-valued Vassiliev invariant $v$ of order $n$, there is
a integer function on clasp diagrams $A\mapsto v_A$ which vanishes on
the diagrams with more than $n$ chords and such that
$$v(K_B) = \sum_{A\in\mathcal{D}} v_A \left< A,B\,\right>.$$
Conversely, any knot invariant which has an expression of this type is a Vassiliev invariant of order $n$.
\end{cor}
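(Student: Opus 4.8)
The plan is to deduce the corollary from Theorem~\ref{thm:completeVI} by translating its statement about the groups $\A_n$ and the map $I_n$ into the language of subdiagram counts. The only computation to keep in mind is that, writing elements of $\Z\D$ in the basis of isomorphism classes of clasp diagrams, the map $I$ takes the form $I(B)=\sum_{A\in\D}\langle A,B\rangle\,A$: summing all subdiagrams of $B$ and collecting them by isomorphism type is exactly what the coefficient $\langle A,B\rangle$ records. Together with the fact, built into Theorem~\ref{thm:completeVI}, that under the isomorphism $\Z\mathcal K\simeq\A$ the knot $K_B$ corresponds to the class of $I(B)$, this gives $I_n(K_B)=[I(B)]$ in $\A_n$.

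First I would prove the direct implication. Let $v$ be an integer-valued Vassiliev invariant of order $n$; by Theorem~\ref{thm:completeVI} applied with $M=\Z$ there is a homomorphism $w\colon\A_n\to\Z$ with $v=w\circ I_n$. Precomposing $w$ with the quotient map $\Z\D\to\A_n$ yields a homomorphism $\widetilde w\colon\Z\D\to\Z$, and I set $v_A:=\widetilde w(A)$. Since every clasp diagram with more than $n$ chords becomes $0$ in $\A_n$, we get $v_A=0$ for all such $A$. Then $v(K_B)=w(I_n(K_B))=\widetilde w(I(B))=\sum_{A\in\D}\langle A,B\rangle\,\widetilde w(A)=\sum_{A\in\D}v_A\,\langle A,B\rangle$; the sum is finite since only finitely many isomorphism classes of clasp diagrams have at most $|B|$ chords.

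For the converse, suppose integers $v_A$ are given, with $v_A=0$ whenever $A$ has more than $n$ chords, and suppose that $B\mapsto\sum_{A\in\D}v_A\,\langle A,B\rangle$ depends only on the knot $K_B$, defining a knot invariant $v$. Define $w\colon\Z\D\to\Z$ on the basis by $w(A)=v_A$; as $v_A=0$ for diagrams with more than $n$ chords, $w$ kills the subgroup they generate. Since $\sum_{A\in\D}v_A\,\langle A,B\rangle=w(I(B))$, the hypothesis that this quantity is a well-defined knot invariant means precisely that $w\circ I$ vanishes on $R$ (by Theorem~\ref{thm:moves}, two clasp diagrams realize the same knot iff their difference lies in $R$), i.e. that $w$ vanishes on $I(R)$. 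Hence $w$ descends to a homomorphism $\overline w\colon\A_n\to\Z$ with $v=\overline w\circ I_n$, and the first half of Theorem~\ref{thm:completeVI} identifies $v$ as a Vassiliev invariant of order $n$.

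Since Theorem~\ref{thm:completeVI} is taken as given, there is no real obstacle; the only point requiring care is the equivalence used in the converse between ``$\sum_A v_A\langle A,B\rangle$ is a knot invariant'' and ``$w$ annihilates $I(R)$'', which in turn relies on the bookkeeping identities $I(B)=\sum_A\langle A,B\rangle\,A$ and $I_n(K_B)=[I(B)]$. So the whole content of the proof is a careful unwinding of the definitions of $I$, $\A_n$, $I_n$ and $\langle\,\cdot\,,\cdot\,\rangle$.
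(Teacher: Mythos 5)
Your proof is correct and matches the paper's treatment: the paper presents this corollary as a direct restatement of Theorem~\ref{thm:completeVI} in the language of subdiagram counts, resting on exactly the identities you isolate, namely $I(B)=\sum_{A\in\mathcal{D}}\left< A,B\,\right> A$ and the identification of homomorphisms $\A_n\to\Z$ with integer functions $A\mapsto v_A$ vanishing on diagrams with more than $n$ chords. Your careful unwinding (including the use of Theorem~\ref{thm:moves} to see that well-definedness of the count as a knot invariant is equivalent to vanishing on $I(R)$) is precisely the argument the paper leaves implicit.
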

Note that $I_n$ itself is a Vassiliev invariant of order $n$, universal in the sense that any other invariant of order $n$ factors through it. Corollary~\ref{thm:Goussarov} applied to $I_n$ gives
$$I_n (K_B) = \sum_{A\in\mathcal{D}}  [A]\cdot \left< A,B\,\right>,$$
where $[A]$ is the class of the diagram $A$ in $\A_n$.

\medskip

The hard part of the Goussarov Theorem for Gauss diagrams
consists in keeping track of the realizability of the diagrams. Here
this obstacle does not exist.

\begin{proof}[Proof of Theorem~\ref{thm:completeVI}]
Singular knots can be 
represented by clasp diagrams which have \emph{special chords}.  Each special chord, drawn below with a dashed line, corresponds to  
a clasp on which one of the two crossings is replaced by a double point:
$$\includegraphics[height=0.28in]{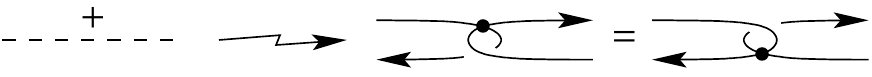}$$
$$\includegraphics[height=0.28in]{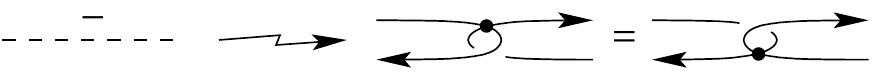}$$
Any singular knot can be represented in this way by a diagram with special chords (see Proposition~\ref{prop:singknots}).
In fact, the space of singular knots with $n$ double points is spanned by the singular knots represented by the diagrams with at least $n$ chords, \emph{all} of them special. 
Indeed, consider a clasp diagram $D$ whose special chords form a subdiagram $D''\subseteq D$. Write $|D|$ for the total number of chords of a diagram $D$ and $|D|^-$ for the number of its negative chords. 
 Then, the singular knot $K_{D}$ realizing $D$ can be written as
$$K_{D} = \sum_{D''\subseteq D'\subseteq D} (-1)^{|D|^--|D'|^-} K_{\overline{D'}},$$
where $\overline{D'}$ is obtained from ${D'}$ by making all of its chords special.

Now, observe that the map $I$ is invertible; for a clasp diagram $D$ we have
$$I^{-1}(D) = \sum_{D'\subseteq D} (-1)^{|D|-|D'|} D'.$$
For a clasp 
diagram $D$ with $k$ chords, the linear combination of
knots $$\sum_{D'\subseteq D} (-1)^{|D|-|D'|} K_{D'}$$ is, up to a sign,
a singular knot with $k$ double points, whose diagram can be obtained
from $D$ by making all its chords special.  Therefore, the map $I^{-1}$ identifies the subspace of diagrams with more than $n$ chords in $\Z\D$ with the subspace of diagrams that represent singular knots with $n$ double points.

As a consequence, the map $I_n$ vanishes on all singular knots with more than $n$
double points and $w\circ I_n$ is a Vassiliev invariant. On the other hand, consider 
an arbitrary Vassiliev invariant $v$ of order $n$ as a function on clasp diagrams. 
Then, $v\cdot I^{-1}$ is a homomorphism $\A\to M$ which vanishes on the diagrams with
more than $n$ chords and, therefore, descends to $\A_n$.
\end{proof}

\begin{exa}
Extend $\left<A,B\,\right>$ to linear combinations of clasp diagrams by linearity.
We have the following formulae for the first two Vassiliev knot invariants:
$$
v_2(K_B)=\left<\ffig{0.4in}{v2}\ , B\right>\ , \qquad
v_3(K_B)=\left<\ffig{0.4in}{v3_H}\ +2\ \ffig{0.4in}{v3_XX}\ +\
\ffig{0.4in}{v3_Xplus}\ -\ \ffig{0.4in}{v3_Xminus}\ , B\right>
$$
Here the heights and some signs are omitted from the pictures. This means that one should take the sum over all possible decorations of these chord diagrams  with heights and signs (where applicable) of the chords. Each diagram then should be taken with the coefficient equal to the product of the omitted signs. These formulae should be compared with those of \cite{PV, GPV}.

For string links with more than one component we also have simple expressions. The invariant $v_{12}$ of degree two of a two-component string link $K_B$ represented by a clasp diagram $B$ has the form
$$v_{12}(K_B)=\left<\ffig{0.3in}{v12_1}\ +\ \ffig{0.3in}{v12_2}\ +\ \ffig{0.3in}{v12_3} , B\right>$$
Also, up to a normalization (some combination of linking numbers) and depending on the conventions, the triple linking number 
$\mu_{12,3}$  of a three-component string link $K_B$ can be obtained as
$$\mu_{12,3}(K)=\left<\ffig{0.3in}{mu1}\ +\ \ffig{0.25in}{mu2}\ +\ \ffig{0.3in}{mu3} , B\right>$$

\end{exa}

\subsection{Relations in the $\A_n$} It may be instructive to describe explicitly the image of the moves  ${\rm A} - \mathrm{C}_4$ under the map $I$. 

The moves ${\rm A}$ and ${\rm B}$ do not change under $I$:  a cyclic shift of the heights of the chords of a diagram, as well as an interchange of the heights of two disjoint consecutive chords, produces the same element of $\A$. The moves $\mathrm{C}_1$ and $\mathrm{C}_2$ become
\begin{equation}
\tag{$\mathrm{C}_1'$}\ffig{0.5in}{C1left}\ =0
\end{equation}
and
\begin{equation}
\tag{$\mathrm{C}_2'$}\ffig{0.5in}{C2left_1}\ + \ \ffig{0.5in}{C2left_2}\ + \ \ffig{0.5in}{C2left_3}\ =0.
\end{equation}
The image of the move $\mathrm{C}_4$ under $I$ is somewhat complicated. However, in the proof of Theorem~\ref{thm:moves} we will see that that the moves $\mathrm{A} - \mathrm{C}_4$ have a number of useful consequences. One of them is the move that we call $\mathrm{D}_1$. Its mirror image $\mathrm{D}_1^*$ under $I$ transforms into
\begin{equation}
\tag{$\mathrm{D}'$}\ffig{0.7in}{D1starleft}\ + \ \ffig{0.7in}{D1starleft_1}\ + \ \ffig{0.7in}{D1starleft_2}\ =\
\ffig{0.7in}{D1starright}\ + \ \ffig{0.7in}{D1starright_1}\ + \ \ffig{0.7in}{D1starright_2}\ .
\end{equation}
Using this relation together with $\mathrm{C}_1'$ and $\mathrm{C}_2'$ we can simplify the image of $\mathrm{C}_4$ 
 so as to get 
\begin{multline}
\tag{$\mathrm{C}_4'$} \ffig{0.85in}{C4left_1}\ + \ \ffig{0.85in}{C4left_2}\ + \ \ffig{0.85in}{C4left_3}\ + \ \ffig{0.85in}{C4left_4}\ =\\
\ffig{0.85in}{C4right_1}\ + \ \ffig{0.85in}{C4right_2}\ + \ \ffig{0.85in}{C4right_3}\ + \ \ffig{0.85in}{C4right_4}\ .
\end{multline}

This is a complete set of relations in $\A$. One obtains the set of relations for the abelian group $\A_n$, dual to the Vassiliev invariants of order $n$, by setting, in addition, all the diagrams with more than $n$ chords to be equal to zero. 

The kernel of the natural map  $\A_n\to\A_{n-1}$ is dual to the space of Vassiliev invariants of order $n$ modulo those of order $n-1$. By a fundamental result of Kontsevich it is isomorphic, up to torsion, to the abelian group generated by the usual chord diagrams modulo the 1T and 4T relations. The above relations in $\A_n$ clearly illustrate this fact. 

Indeed, consider the relations $\mathrm{C}_2'$ and $\mathrm{D}'$ involving diagrams with $n$ and $n+1$ chords and  $\mathrm{C}_4'$ involving diagrams with $n$, $n+1$ and $n+2$ chords. Since the diagrams with more than $n$ chords vanish in $\A_n$, these relations become particularly simple on diagrams with precisely $n$ chords: $\mathrm{C}_2'$ allows to eliminate the signs of the chords by changing the sign of the whole diagram; $\mathrm{C}_4'$ shows that the heights of the chords can be freely interchanged and $\mathrm{D}'$ translates into the usual 4T relation.

\begin{exa}
It is not hard to verify that $\A_3$ is a free abelian group of rank three. Denote by $H, X$ and $O$ the
equivalence classes of the following diagrams:  
$$ \ffig{0.4in}{v3_H} \quad \ffig{0.4in}{v3_Xplusminus}\quad \ffig{0.4in}{v3_O}$$
Then, for any knot $K$ we have
$$I_3(K) = v_3(K) \cdot H - v_2(K) \cdot X + O.$$
\end{exa}

We stress that, while the connection of the chord diagrams with the Vassiliev invariants rests on the existence of the Kontsevich integral, the abelian groups $\A_n$  are dual to the Vassiliev invariants by definition. In particular, computations in $\A_n$ may be useful for computing the numbers of torsion-valued Vassiliev invariants.

\section{Knots as closures of pure braids}\label{section:shortcircuit} 

\subsection{Presentations of the pure braid groups}
A pure braid on $m$ strands is a homotopy class of paths in the configuration space of $m$ distinct points in $\R^2$. As the basepoint, we choose a configuration in which the $m$ points are collinear and lie on the $x$-axis. Let us label the points in this configuration by the integers from 1 to $m$ in the increasing order from left to right;  then, the strands of each braid are also numbered from 1 to $m$. 

The group $P_m$ is generated by the braids $A_{i,j}$,  with $1\leq i< j\leq m$, whose all strands, apart  from the strands $i$ and $j$, are vertical, while the strands $i$ and $j$ twist around each other once behind the other strands. The product of two braids is obtained by placing the first factor on top of the second factor; see Figure \ref{fig:pure_braids}.

\begin{figure}[htb]
%\vspace{1.0in}
\includegraphics[height=1in]{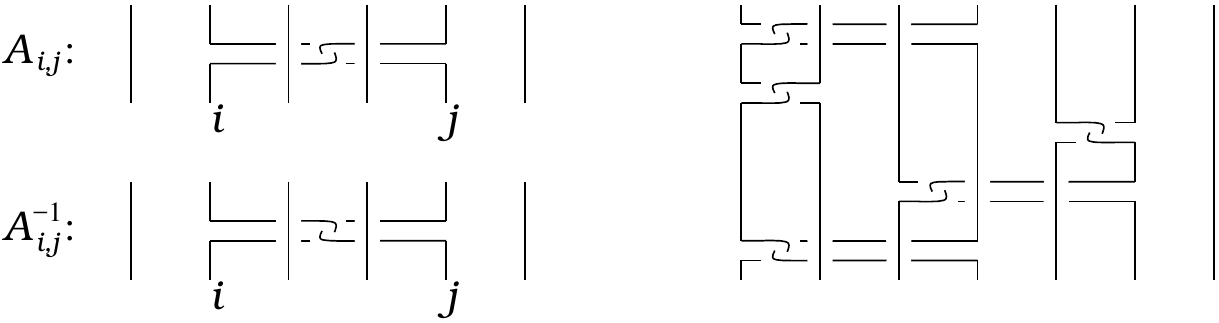}
\caption{Generators $A_{i,j}$, $A_{i,j}^{-1}$ and the braid $w=A_{1,4} A_{1,2} A^{-1}_{5,6} A_{3,6} A^{-1}_{1,4}$}
\label{fig:pure_braids}
\end{figure}

We will make use of two different presentations for $P_m$ with the same set of generators $\{A_{i,j}\}$.

The \emph{Artin presentation} \cite{Artin, ArtinE} has the relations
\begin{equation}\label{rel-artin1}
A_{r,s}^{-1}A_{i,j}A_{r,s}  = 
\begin{cases}
A_{i,j} 
\hfill \text{if}\ i<j<r<s & \text{or}\  i<r<s<j\quad  (\text{A}1)\\
  A_{i,s} A_{i,j}A_{i,s}^{-1}
& \text{if}\ i<j=r<s \quad\  (\text{A}2)\\
 (A_{i,s}A_{i,r}) A_{i,j} (A_{i,s}A_{i,r})^{-1} 
& \text{if} \ i<r<j=s \quad\  (\text{A}3)\\
 (A_{i,s}A_{i,r}A_{i,s}^{-1}A_{i,r}^{-1}) A_{i,j} 
(A_{i,s}A_{i,r}A_{i,s}^{-1}A_{i,r}^{-1})^{-1}
& \text{if}\ i<r<j<s \quad\  (\text{A}4)
\end{cases}
\end{equation}
The following relations are consequences of (A1)-(A4):
\begin{equation}\label{rel-artin2}
A_{r,s}A_{i,j}A_{r,s}^{-1}  = 
\begin{cases}
A_{i,j} 
\hfill \text{if}\ i<j<r<s & \text{or}\  i<r<s<j  \\
  (A_{i,s}A_{i,r})^{-1} A_{i,j} (A_{i,s}A_{i,r}) 
& \text{if}\ i<j=r<s \\
 A_{i,r}^{-1} A_{i,j}A_{i,r}
& \text{if} \ i<r<j=s \\
 (A_{i,r}^{-1}A_{i,s}^{-1}A_{i,r}A_{i,s})^{-1} A_{i,j} 
(A_{i,r}^{-1}A_{i,s}^{-1}A_{i,r}A_{i,s})
& \text{if}\ i<r<j<s 
\end{cases}
\end{equation}

The second presentation that will be of use is the \emph{Margalit-McCammond presentation\footnote{in fact, this is only one 
of the family of the presentations considered in \cite{McMargalit}.}} \cite{McMargalit}; it has the relations
\begin{equation}\label{rel-mcmargalit}
\begin{array}{lr}
 [A_{i,j},A_{r,s}] =1 & \text{if}\ i<j<r<s \ \text{or}\  i<r<s<j\quad  (\text{MM}1)\\[2pt]
 A_{i,r}A_{r,s}A_{i,s}=A_{r,s}A_{i,s}A_{i,r}=A_{i,s}A_{i,r}A_{r,s} & \text{if \ } i<r<s\quad  (\text{MM}2)\\[2pt]
 \left[ A_{i,j}, A_{i,r}A_{r,s}A_{i,r}^{-1}\right] =1 & \text{if}\ i<r<j<s\quad  (\text{MM}3)
 
\end{array}
\end{equation}

It is not hard to obtain both presentations from each other. 
Indeed, (MM1) is the same thing as (A1), (MM3) follows from (A4) and (A2), the first equality of (MM2) coincides with (A2) 
and the second equality is obtained from (A2) and (A3). Obtaining the Artin relations from the Margalit-McCammond relations 
is equally easy.

A pure braid written as a reduced word $w$ in the $A_{i,j}$ is said to be in the \emph{combed form} if $A_{i,j}$ appearing to the left of $A_{i'j'}$ in $w$ implies $i\leq i'$. Each pure braid has a unique combed form. Indeed, the relations (\ref{rel-artin1}) and (\ref{rel-artin2}) show how to interchange $A_{i,j}^{\pm 1}$ with the $A_{r,s}^{\pm 1}$ for $i<r$. 
The right-hand side of each of the relations in (\ref{rel-artin1}) and (\ref{rel-artin2}) only involves generators with the same 
first index $i$ and, hence,  any word in the $A_{i,j}$ after a finite number of applications of (\ref{rel-artin1}) and (\ref{rel-artin2}) can be transformed into a combed form.  
The uniqueness of the combed form is a consequence of the iterated semidirect product decomposition
$$P_m = F_{m-1}\rtimes P_{m-1} = F_{m-1}\rtimes F_{m-2}\rtimes\ldots\rtimes F_2\rtimes \Z,$$
where the free group $F_{m-k}$ is generated by the $A_{k,j}$ with $j>k$. 

\subsection{The short-circuit closure}\label{ssc}

A pure braid on an odd number of strands can be closed up to form a long knot (the \emph{short-circuit closure} of the braid):
$$
\includegraphics[height=1in]{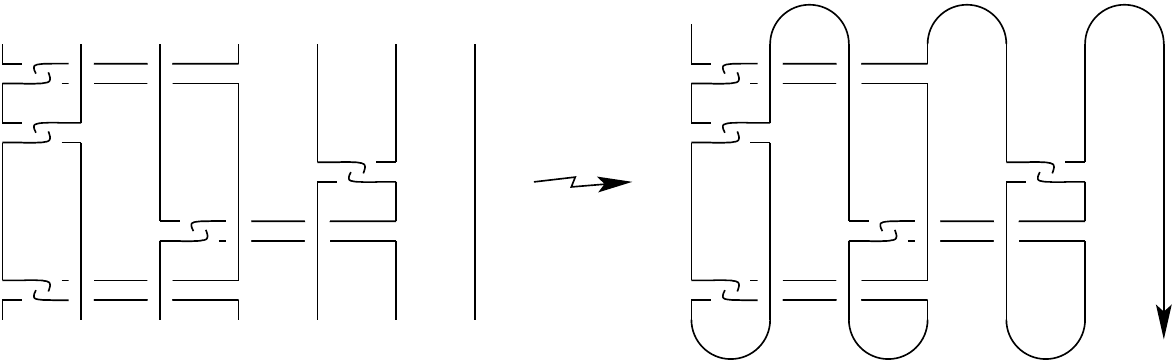}
$$

The short-circuit closure is a map from $P_{2m-1}$, with $m>0$, to the set of isotopy classes of knots. It is compatible with the inclusion maps $P_{2m-1}\to P_{2m+1}$ which add two unbraided non-interacting strands to the right; indeed, adding these two strands results in simply adding a ``bump'' to the knot. Therefore, one can speak of the short-circuit map as defined on the direct limit $P_{\infty}$ of the pure braid groups. The image of is map is the whole set of isotopy classes of knots; two braids produce the same knot if and only if they are related by a sequence of moves of two types, which we call the \emph{short-circuit moves}. The short-circuit moves of the first type exchange a braid $a$ with a braid of the form $A_{2r-1,2r}^{\pm1}\cdot  a$ or with a braid of the form $a\cdot  A_{2r,2r+1}^{\pm1}$ for some $r>0$:
$$\includegraphics[width=2.5in]{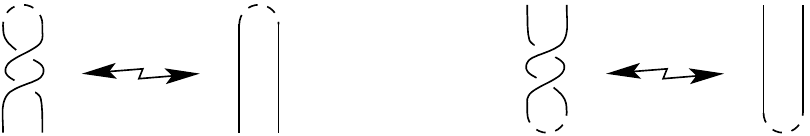}$$
The short-circuit moves of the second type exchange $a$ with one of the following:
\begin{itemize} 
\item  $(A_{r,2s+1} A_{r, 2s})^{\eps} \cdot a$  with $0<r< 2s$, 
\item $(A_{2s+1, r} A_{2s, r})^{\eps} \cdot a$ with $0< 2s +1< r$,
\item  $a\cdot (A_{r, 2s} A_{r, 2s-1})^{\eps}$ with $0<r< 2s-1$,
\item $a\cdot (A_{2s,r}A_{2s-1,r})^{\eps}$ with $0< 2s < r$,  
\end{itemize}
where $\eps=\pm1$, as illustrated below:
$$\includegraphics[width=5.4in]{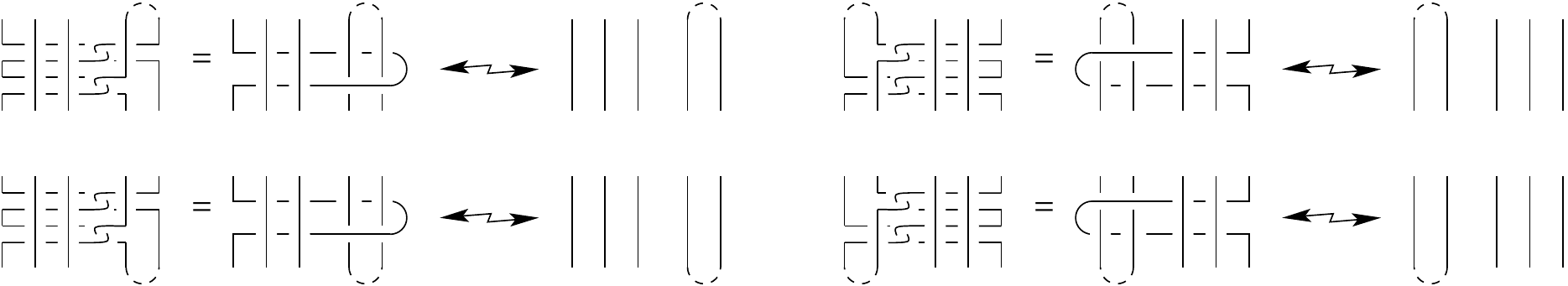}$$
See \cite{MS} or \cite{CDM} for the details of the proof.

In what follows, we will refer to the limit $P_{\infty}$ of the pure braid groups simply as \emph{the} pure braid group.

\subsection{Clasp diagrams for a short-circuit closure of a braid}\label{section:diagramsviaclosure}

Consider a word $w$ in the generators $A_{i,j}$ with indices of opposite parity. We can encode it graphically 
by $2m-1$ evenly spaced vertical strands with ends on the same heights, together with $n$ horizontal chords, 
all on different levels, each equipped with a $\pm$ sign, connecting pairs of different strands. 
The chord connecting the strands $i$ and $j$ and labelled with $\pm1$ corresponds to $A_{i,j}^{\mp1}$ 
(note the opposite sign!); we draw it ``behind'' all the vertical strands it crosses.
The corresponding word $w$ is read off this diagram from top to bottom. 
Then, one can speak of its short-circuit closure, which is a long clasp diagram. 
Namely, tilt the vertical strands, the odd ones to the left and the even ones to the right until their ends meet. 
Think of the resulting picture as three-dimensional and look at it from above. What one sees is a long clasp diagram, 
see Figure~\ref{fig:braid2clasp}.

\begin{figure}[htb]
%\vspace{1.0in}
\includegraphics[width=5in]{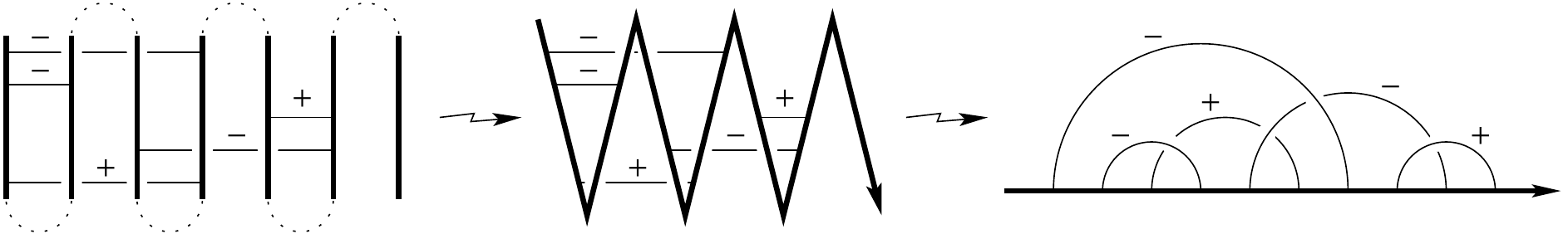}
\caption{Converting a word $w=A_{1,4} A_{1,2} A^{-1}_{5,6} A_{3,6} A^{-1}_{1,4}$ into a clasp diagram}
\label{fig:braid2clasp}
\end{figure}

It is straightforward to see that the short-circuit closure of a word in the generators $A_{i,j}$ with indices of 
opposite parity is a clasp diagram representing the short-circuit closure of the corresponding braid:

$$
\includegraphics[width=5in]{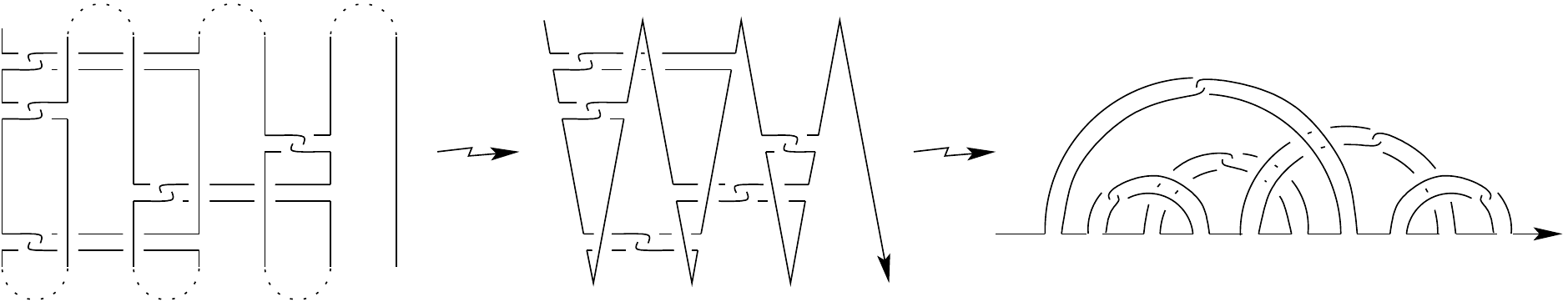}
$$ 

Moreover, if this braid is in combed form and only contains generators with the odd first index, the diagram is descending. 

\subsection{Proof of Theorems~\ref{thm:representation} and \ref{thm:descending}}

In what follows, we shall often abuse the terminology 
and use the term ``braids'' for the words in the $A_{i,j}$ rather than their equivalence classes. 
This should not lead to confusion.

From the discussion of the previous subsection we see that it is sufficient to establish that each knot is a short-circuit 
closure of a braid which is in combed form and only involves generators $A_{i,j}$ with $i$ odd and $j$ even. 
Each generator in such a braid should be of the form $A_{2r-1, 2s}$ for some positive $r\leq s$. 
Let us call all other generators (namely, the $A_{i,j}$ with $i$ even or $j$ odd) ``inadmissible''.
Consider a combed braid $a$ written as a word in the $A_{i,j}$ that includes precisely $k$ inadmissible generators 
or their inverses. 
We shall transform it, without changing its short-circuit closure, into a combed braid  that involves $k-1$ inadmissible 
generators.

Denote by $\tau_k :P_\infty\to P_\infty$ the endomorphism that adds two vertical strands between the 
$k$-th and the $(k+1)$-st strands, in front of all other strands. 
It sends $A_{i,j}$ to $A_{i,j}$ if $j\leq k$, to $A_{i,j+2}$ if $i\leq k < j$, and to  $A_{i+2,j+2}$ 
when $k < i$. Note that $\tau_k(A_{i,j})$ is inadmissible if and only if $A_{i,j}$ is.

We have $a = b A_{i,j}^{\eps} c$ where the braid $b$ involves no inadmissible generators, $A_{i,j}$ is inadmissible, 
$\eps=\pm 1$  and $c$ involves $k-1$ inadmissible generators. There are six possibilities for this according to whether 
$i$ and $j$ are even or odd and whether $\eps$ is positive or negative. In each of these cases we can transform $a$ into 
another braid that has the same short-circuit closure and involves one inadmissible generator less:

$$
\begin{array}{lcl}
 b A_{2r, 2s+1}^{-1}c &\quad\mapsto\quad& (\tau_{2r-1}\circ\tau_{2s+1})(b) \cdot A_{2r+1, 2s+4}^{-1}  \cdot (\tau_{2r}\circ\tau_{2s})(c)\\[2pt]
 b A_{2r, 2s+1}c &\quad\mapsto\quad&  (\tau_{2r-1}\circ\tau_{2s+1})(b)  \cdot A_{2r+1, 2s+4} \cdot  (\tau_{2r}\circ\tau_{2s})(c) \\[2pt]
 b A_{2r, 2s}^{-1}c &\quad\mapsto\quad &\tau_{2r-1}(b) \cdot A_{2r+1, 2s+2} \cdot \tau_{2r}(c),\\[2pt]
 b A_{2r+1, 2s+1}c &\quad\mapsto\quad& \tau_{2s+1}(b) \cdot   A_{2r+1, 2s+2}^{-1} \cdot   \tau_{2s}(c),  \\[2pt]
 b A_{2r, 2s}c &\quad\mapsto\quad&(\tau_{2r-1}\circ\tau_{2s-1}\circ\tau_{2s-1})(b) \cdot A_{2r+1, 2s+4}^{-1}A_{2s+3, 2s+6}^{-1} \cdot  (\tau_{2r}\circ\tau_{2s}\circ\tau_{2s})(c),\\[2pt]
 b A_{2r+1, 2s+1}^{-1}c &\quad\mapsto\quad& (\tau_{2s+1}\circ\tau_{2s+1})(b) \cdot  A_{2r+1, 2s+2} A_{2s+1, 2s+4} \cdot  (\tau_{2s}\circ\tau_{2s})(c)\\[2pt]
  \end{array}
$$

These transformations are illustrated on Figure~\ref{fig:guatever}, with $b=c=1$. 
The first four of them carry combed braids into combed braids. 
The last two transformations, in principle, do not; however, in both of them the generator 
that breaks the order commutes with all other generators that appear below it in the braid. 
Therefore, applying these transformations repeatedly, we arrive to a combed braid without 
inadmissible generators which represents the same knot.

\begin{figure}[htb] 
%\vspace{1.2in}
\includegraphics[height=1.6in]{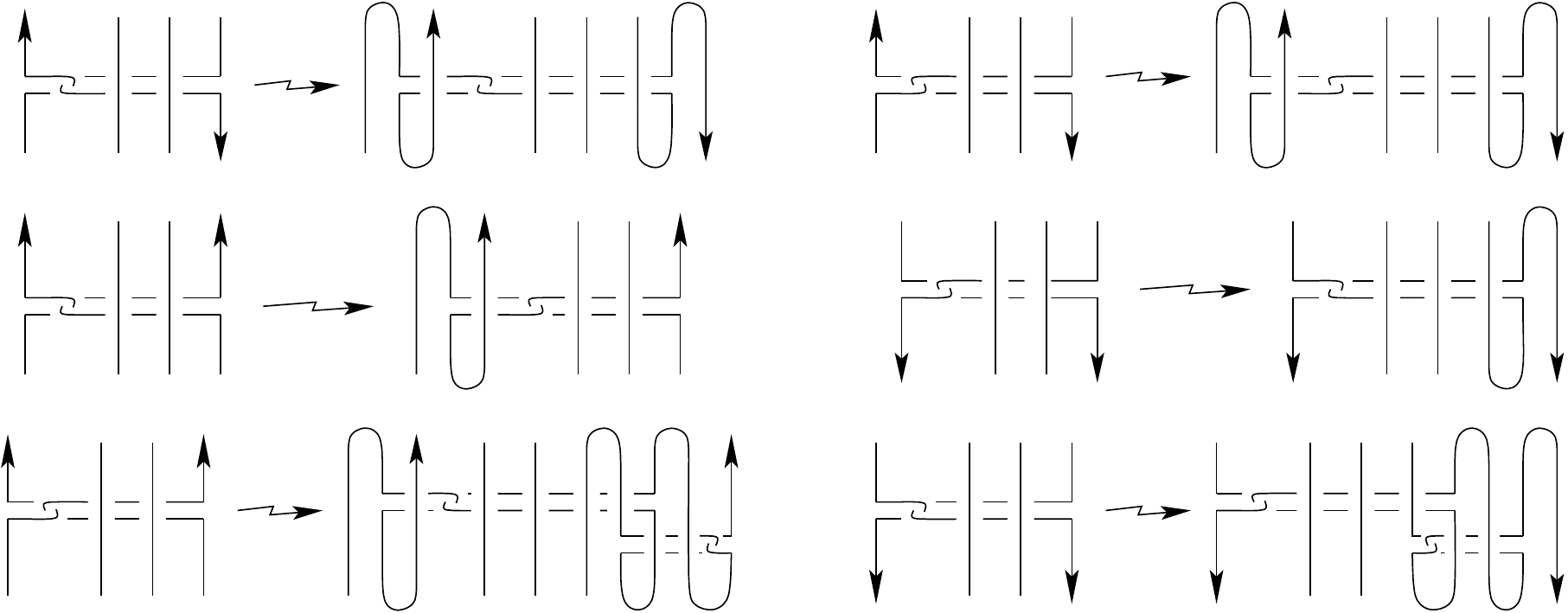}
\caption{Elimination of inadmissible generators}
\label{fig:guatever}
\end{figure}

The arguments of this proof are valid word-for-word for singular knots and clasp diagrams with special chords, defined in the proof of Theorem~\ref{thm:completeVI}. Just as a chord of a clasp diagram with the sign $\eps$ represents the generator (or its inverse) $A_{2r+1, 2s}^{-\eps}$, a special chord with the sign $\eps$ represents $\eps (A_{2r+1, 2s}^{-\eps}-1)$, where $1$ is understood as the trivial braid. In particular, we have

\begin{prop}\label{prop:singknots}
Any singular knot is a realization of a clasp diagram with special chords.
\end{prop}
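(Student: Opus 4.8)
The plan is to repeat the proof of Theorems~\ref{thm:representation} and~\ref{thm:descending}, with honest pure braids replaced by singular ones. Call a word in the $A_{i,j}^{\pm1}$ in which some letters have been turned into \emph{singular letters} a \emph{singular pure braid word}; here a singular letter on the strands $i<j$ with sign $\eps$ is the two-strand tangle obtained from the clasp $A_{i,j}^{-\eps}$ by replacing one of its two clasp crossings with a double point, so that algebraically it represents $\eps(A_{i,j}^{-\eps}-1)\in\Z P_\infty$, exactly as in the remark above. After short-circuit closure, such a word with $n$ singular letters produces a singular knot with $n$ double points, and this singular knot is the realization of a clasp diagram in which the chords coming from the singular letters are special. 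So first I would show that every singular knot $S$ with $n$ double points arises this way. Let $\gamma$ be a defining curve for $S$, with transversal double points $p_1,\dots,p_n$, let $\gamma_+$ be the honest long knot obtained by resolving each $p_i$ positively, and mark a small disk around each of the $n$ crossings so created. By the short-circuit closure theorem (\cite{MS}, \cite{CDM}) the knot $\gamma_+$ is the short-circuit closure of some pure braid word, via an isotopy that drags the marked disks into the closure; since the return arcs of a short-circuit closure are disjoint and crossingless, every crossing of the closure lies in the braid part, so after a preliminary isotopy one may assume that the $n$ marked disks lie in $n$ distinct generators, each at one of the two crossings of the corresponding clasp. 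Un-resolving the marked crossings then exhibits $S$ as the short-circuit closure of a singular pure braid word with $n$ singular letters.

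Next I would make this word admissible and read off the diagram. The argument eliminating inadmissible generators in the proof of Theorems~\ref{thm:representation} and~\ref{thm:descending} applies without change: one repeatedly picks the topmost inadmissible letter and applies to it the appropriate one of the six transformations in that proof. The endomorphisms $\tau_k$ extend multiplicatively to $\Z P_\infty$ and carry a singular letter on the strands $(i,j)$ to a singular letter on the shifted strands, inadmissible exactly when the original is, by the same observation as for the $A_{i,j}$. When the topmost inadmissible letter is itself singular, the relevant transformation is realized by an isotopy of the short-circuit closure that reroutes strands and inserts bumps but leaves untouched the crossings being singularized, so it carries the singular letter to a singular letter of the required shape, up to an overall sign in those two of the six transformations that reverse the power of a generator. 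After finitely many steps one obtains a singular pure braid word all of whose letters are of the form $A_{2r-1,2s}^{-\eps}$ or $\eps(A_{2r-1,2s}^{-\eps}-1)$; reading off the associated diagram as in Subsection~\ref{section:diagramsviaclosure} turns the ordinary letters into ordinary chords and the $n$ singular letters into special chords, and the resulting clasp diagram with special chords realizes $\pm S$, the sign being immaterial as in the proof of Theorem~\ref{thm:completeVI}. The case of compact singular knots then follows from that of long ones in the usual way.

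I expect the main obstacle to be the localization step in the first paragraph: making precise, in the singular setting, the passage from ``$\gamma_+$ is a short-circuit closure of a braid'' to ``the marked disks sit one per generator, at clasp crossings'' is a singular Alexander-type statement, provable by the methods of its non-singular counterpart, but it does require an argument. The other point requiring care is the sign bookkeeping of the second paragraph: one must check that a singular letter, with its sign recorded as $\eps(A_{i,j}^{-\eps}-1)$, is genuinely preserved by the $\tau_k$ and by each of the six transformations, including those that flip a generator's power, and keep track of the resulting global sign on $S$. Both are routine given the non-singular case and the dictionary between chords and braid generators set up in the remark above, which is why the proposition follows from that discussion essentially word for word.
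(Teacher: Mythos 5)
Your proposal follows the paper's own route: the paper proves this proposition simply by remarking that the proof of Theorems~\ref{thm:representation} and~\ref{thm:descending} applies word-for-word once a special chord of sign $\eps$ is read as the singular letter $\eps(A_{2r+1,2s}^{-\eps}-1)$, i.e., by running the elimination of inadmissible generators on short-circuit presentations of braid words containing singular letters, exactly as you do. The extra details you supply (localizing the double points into clasp-type singular letters of a short-circuit presentation, and tracking signs through the $\tau_k$ and the six transformations) are precisely the points the paper leaves implicit, so this is the same argument, only written out.
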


\section{Proof of Theorem~\ref{thm:moves}}\label{section:proof}

Each clasp diagram can be thought of as the closure of a pure braid as in Section~\ref{section:diagramsviaclosure}. 
Two braids represent the same knot under the short-circuit closure if and only if they are related by a finite sequence 
of moves of two kinds: the relations in the pure braid group and the short-circuit moves defined in Section~\ref{ssc}. 

A braid is converted into a clasp diagram by replacing each generator $A_{i,j}$ or its inverse $A_{i,j}^{-1}$ by one or two chords, according to the parity of $i$ and $j$ as shown below:
$$
%\vspace{1in}
\includegraphics[width=5.4in]{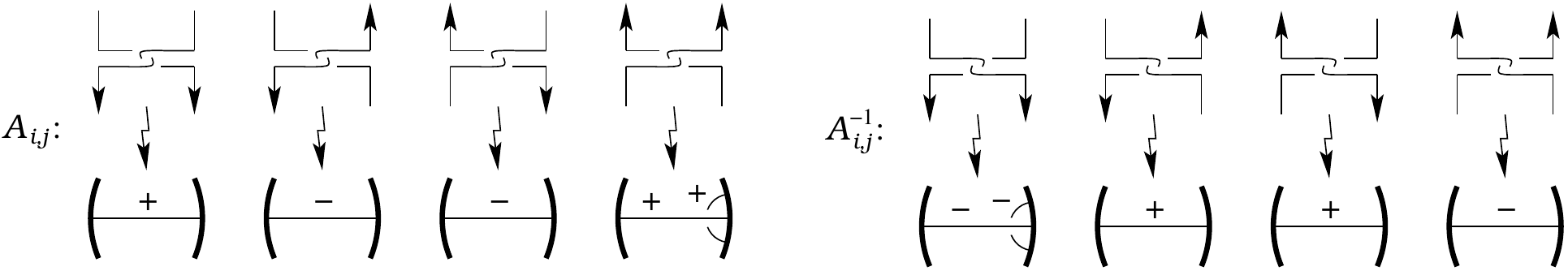}
$$
See Figure~\ref{fig:guatever} for the explanation of these rules.

The short-circuit moves of the first type translate into the moves $\mathrm{C}_1$. 
For the short-circuit moves of the second type it suffices to consider $\eps=+1$ if $r$ is odd and 
$\eps=-1$ if $r$ is even. These moves translate into the moves $\mathrm{C}_2$ on clasp diagrams.
 
Now, consider the Margalit-McCammond relations in the pure braid group.

The relation (MM1) translates into the move A. 

For the relation (MM2) there are $2^3=8$ different cases depending on the parities of $i$, $r$ and $s$.
In order to simplify the translation of (MM2), we consider the original relations 
$$A_{i,r}A_{r,s}A_{i,s}=A_{r,s}A_{i,s}A_{i,r}=A_{i,s}A_{i,r}A_{r,s}$$ 
if at least two of the indices $i$, $r$, $s$ are odd and the inverse relation 
$$A_{i,s}^{-1}A_{r,s}^{-1}A_{i,r}^{-1}=A_{i,r}^{-1}A_{i,s}^{-1}A_{r,s}^{-1}
=A_{r,s}^{-1}A_{i,r}^{-1}A_{i,s}^{-1}$$ 
if at least two of the indices $i$, $r$, $s$ are even.
The relations (MM2) translate into the following moves, that we denote by $\mathrm{M}_1$, 
$\mathrm{M}_2$ and $\mathrm{M}_3$:
$$\fig{rels_MM12}$$
if one of $i$, $r$, $s$ is even and
$$\fig{rels_MM3}$$
if all indices are odd.
The inverse relations translate into the mirror moves $\mathrm{M}_1^*$, $\mathrm{M}_2^*$ 
if one of the indices is odd and into the mirror move $\mathrm{M}_3^*$ if all indices are even.

For the relation (MM3) there are $2^4=16$ different cases depending on the parities of 
$i$, $r$, $j$ and $s$.
In order to simplify the translation of (MM3), we consider the (equivalent) relations 
$$A_{i,j}^\eps\left( A_{i,r}A_{r,s}^{\delta \sigma} A_{i,r}^{-1}\right) = 
\left( A_{i,r}A_{r,s}^{\delta \sigma} A_{i,r}^{-1}\right) A_{i,j}^\eps$$ 
where $\eps=+1$ if $i$ and $j$ are of the same parity and $\eps=-1$ otherwise; 
$\delta=+1$ if $i$ and $r$ are of the same parity and $\delta=-1$ otherwise; 
and $\sigma=1$ if $s$ is odd and $\sigma=-1$ otherwise. 

If $i$ is odd and $r$ is even, these relations translate into 
$$\includegraphics[height=0.8in]{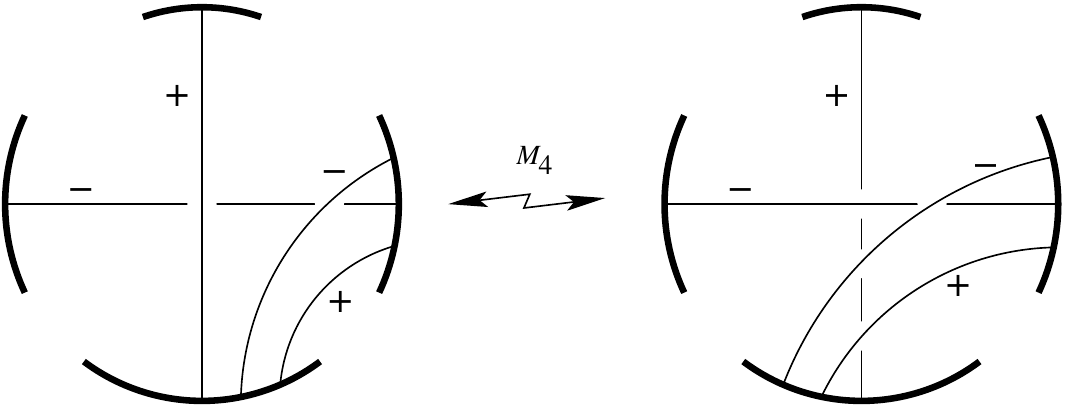}$$

If both $i$ and $r$ are odd, these relations translate into 
$$\includegraphics[height=0.8in]{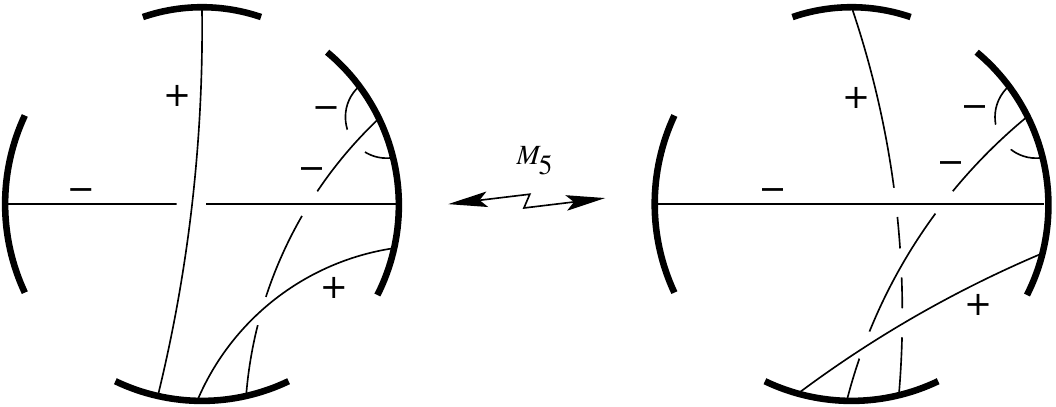}$$
when $s$ is even, and into 
$$\includegraphics[height=0.8in]{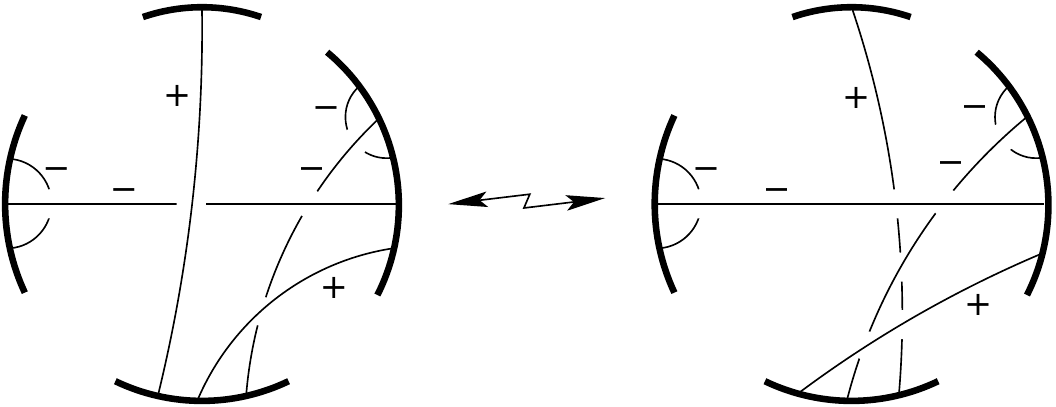}$$
when $s$ is odd.
Obviously, the last move follows from $\mathrm{M}_5$.

Finally, for even $i$ these relations translate into the corresponding mirror moves and we obtain

\begin{prop}
Any two clasp diagrams that gives rise to isotopic knots are related by a finite sequence of moves 
$\mathrm{A}$, $\mathrm{B}$, $\mathrm{C}_1$, $\mathrm{C}_2$, $\mathrm{M}_1$--$\mathrm{M}_5$ and 
$\mathrm{M}_1^*$--$\mathrm{M}_5^*$.  
\end{prop}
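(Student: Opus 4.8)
The proof will be an assembly of the case-by-case translations carried out above, so it is mostly a matter of organizing them. The plan is as follows. As noted at the beginning of this section, every (long) clasp diagram can be realized as the short-circuit closure $\mathrm{cl}(w)$ of a pure braid word $w$, and conversely every such closure, read off the tilted-strand picture of Section~\ref{section:diagramsviaclosure}, is a clasp diagram. By the short-circuit theorem (see \cite{MS} or \cite{CDM}), two pure braid words have isotopic short-circuit closures if and only if they are related by a finite sequence of the defining relations of the pure braid group together with the short-circuit moves of the two types. Hence, given clasp diagrams $D_1$, $D_2$ with isotopic knots, it suffices to choose words $w_1$, $w_2$ with $\mathrm{cl}(w_i)=D_i$ and to rewrite a connecting sequence $w_1\leadsto w_2$, one elementary move at a time, as a sequence of moves relating $D_1$ and $D_2$. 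The reverse implication, that each listed move preserves the knot, is already available: for A, B, $\mathrm{C}_1$, $\mathrm{C}_2$ it was verified in Subsection~\ref{subsec:moves}, and $\mathrm{M}_1$--$\mathrm{M}_5$ and their mirrors come from genuine relations in $P_\infty$, whose short-circuit closures are isotopic.

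The second step is to fix and use the dictionary of the figure preceding this proposition: a generator $A_{i,j}^{\pm1}$ becomes a single signed chord when $i$ is odd and $j$ is even, and a prescribed pair of chords in the remaining three parity cases. With this dictionary an elementary braid move unfolds into a local identity between small configurations of chords, and one checks, case by case, that it is exactly one of the listed diagram moves: relation (MM1) becomes move A (disjoint chords with consecutive heights, which may be interchanged); relation (MM2), in its $2^3$ parity cases, becomes — after replacing the relation by its inverse whenever at least two of $i,r,s$ are even, so that the unfolded identity involves as few chords as possible — the moves $\mathrm{M}_1,\mathrm{M}_2,\mathrm{M}_3$ and, in the complementary cases, their mirrors; relation (MM3), in its $2^4$ parity cases, collapses under the sign substitutions $\eps,\delta,\sigma$ introduced above into $\mathrm{M}_4,\mathrm{M}_5$ and their mirrors, one case being subsumed by $\mathrm{M}_5$. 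Finally, the short-circuit moves of the first type add or delete an extreme generator $A_{2r-1,2r}^{\pm1}$ or $A_{2r,2r+1}^{\pm1}$, which after closure is an isolated chord, hence become $\mathrm{C}_1$; and those of the second type add or delete one of the prescribed products $(A_{r,2s+1}A_{r,2s})^{\eps}$, etc., where choosing $\eps$ according to the parity of $r$ (as prescribed above) turns this into a pair of parallel chords of opposite signs, hence becomes $\mathrm{C}_2$.

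Two points need attention. First, applying a braid relation to an admissible word may introduce inadmissible generators, so $\mathrm{cl}$ must be read through the extended dictionary (one or two chords per generator); one must check that the local identities obtained this way genuinely coincide, up to the already-established moves A, B, $\mathrm{C}_1$, $\mathrm{C}_2$, with the pictured moves $\mathrm{M}_i$, and that any re-combing of the word afterwards only invokes relations already accounted for. Second, move B enters as the bookkeeping device that absorbs the residual ambiguity: distinct braid words reading off the same chord configuration, and the cyclic shift of heights produced when one side of a relation is slid past the closure, are reconciled by B — the isotopy of Subsection~\ref{subsec:moves} that rotates the topmost clasp around the knot. I expect the parity bookkeeping of the second step to be the main obstacle: one must track which generators contribute one chord and which contribute two, and match each of the many unfolded cases of (MM2) and (MM3) with the correct pictured move or its mirror; once the conventions are pinned down (the sign reversal in the chord–generator correspondence, and the left/right tilting directions), each individual case is a finite verification. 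The compact case follows from the long case via the usual identification of compact and long knots.
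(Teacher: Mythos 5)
Your proposal follows essentially the same route as the paper: represent each clasp diagram as the short-circuit closure of a pure braid word, invoke the short-circuit theorem to reduce the problem to the Margalit--McCammond relations plus the two types of short-circuit moves, and translate these through the parity-dependent dictionary (one or two chords per generator) into A for (MM1), $\mathrm{M}_1$--$\mathrm{M}_3$ and mirrors for (MM2), $\mathrm{M}_4$, $\mathrm{M}_5$ and mirrors for (MM3), and $\mathrm{C}_1$, $\mathrm{C}_2$ for the short-circuit moves, exactly as the paper does. The remaining case-by-case verifications you defer are precisely the pictured checks the paper carries out, so the argument is correct and matches the paper's proof.
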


It remains to deduce $\mathrm{M}_1$--$\mathrm{M}_5$ and their mirrors from 
$\mathrm{A}$, $\mathrm{B}$, $\mathrm{C}_1$, $\mathrm{C}_2$, $\mathrm{C}_4$.
\medskip 
Let us start with $\mathrm{M}_1$ and $\mathrm{M}_2$:
\begin{equation}
\fig{MM1_proof}
\tag{$\mathrm{M}_1$}\label{eq:MM1_proof} 
\end{equation}

\begin{equation}
\fig{MM2_proof}
\tag{$\mathrm{M}_2$}\label{eq:MM2_proof} 
\end{equation}
\\

The mirror moves $\mathrm{M}_1^*$ and $\mathrm{M}_2^*$ are also easy to obtain:

\begin{equation}
\fig{MM1star_proof}
\tag{$\mathrm{M}_1^*$}\label{eq:MM1star_proof}
\end{equation}
\\

\begin{equation}
\fig{MM2star_proof}
\tag{$\mathrm{M}_2^*$}\label{eq:MM2star_proof} 
\end{equation}
\\

An auxiliary move $\mathrm{D}_1$ is needed to deal with $M_3$:
\begin{equation}
\fig{D1_proof}
\tag{$\mathrm{D}_1$}\label{eq:D1_proof} 
\end{equation}
\\

Note that the mirror move $\mathrm{D}_1^*$ also can be obtained from $\mathrm{C}_2$ and 
$\mathrm{M}_2$, $\mathrm{M}_2^*$ by mirroring this figure.
Now we are ready to deduce $\mathrm{M}_3$:

\begin{equation}
\fig{MM3_proof}
\tag{$\mathrm{M}_3$}\label{eq:MM3_proof} 
\end{equation}

The mirror move $\mathrm{M}_3^*$ can be obtained from $\mathrm{C}_2$ and $\mathrm{D}_1^*$ 
by mirroring this figure.
Also, $\mathrm{M}_4$ can be obtained from $\mathrm{C}_4$ and $\mathrm{D}_1^*$: 

\begin{equation}
\fig{MM4_proof}
\tag{$\mathrm{M}_4$}\label{eq:MM4_proof} 
\end{equation}

Now the mirror move $\mathrm{C}_4^*$ can be obtained from $\mathrm{C}_2$, $\mathrm{D}_1$ 
and $\mathrm{M}_4$:

\begin{equation}
\fig{C4star_proof}
\tag{$\mathrm{C}_4^*$}\label{eq:C4star_proof}
\end{equation}
\\

Thus $\mathrm{M}_4^*$ can be obtained from $\mathrm{C}_4^*$ and $\mathrm{D}_1$
mirroring the figure for $\mathrm{M}_4$ above.
Two additional auxiliary moves $\mathrm{D}_2$ and $\mathrm{D}_3$ are required to deal 
with $\mathrm{M}_5$:

\begin{equation}
\fig{D2_proof}
\tag{$\mathrm{D}_2$}\label{eq:D2_proof} 
\end{equation}

\begin{equation}
\fig{D3_proof}
\tag{$\mathrm{D}_3$}\label{eq:D3_proof} 
\end{equation}

We can also obtain the mirror moves $\mathrm{D}_2^*$ and then $\mathrm{D}_3^*$ 
mirroring these figures. Finally we can deduce $\mathrm{M}_5$:

\begin{equation}
\fig{MM5_proof}
\tag{$\mathrm{M}_5$}\label{eq:MM5_proof} 
\end{equation}

Mirroring this figure gives the mirror move $\mathrm{M}_5^*$. 
This finishes the proof of Theorem~\ref{thm:moves}.

\medskip

\end{document}